\newcommand{\Gcal}{\mathcal{G}}
\newcommand{\Kcal}{\mathcal{K}}
\newcommand{\Lcal}{\mathcal{L}}
\newcommand{\Tcal}{\mathcal{T}}
 \newcommand{\Dscr}{\mathscr{D}}
 \newcommand{\Escr}{\mathscr{E}}
\newcommand{\Fscr}{\mathscr{F}}
\newcommand{\Hscr}{\mathscr{H}}
\newcommand{\Lscr}{\mathscr{L}}
 \newcommand{\Mscr}{\mathscr{M}}
\newcommand{\Sscr}{\mathscr{S}}
\newcommand{\Tscr}{\mathscr{T}}
\newcommand{\Uscr}{\mathscr{U}}
\newcommand{\Zscr}{\mathscr{Z}}
\newcommand{\rmd}{\mathrm{d}}
\newcommand{\rme}{\mathrm{e}}
\newcommand{\rmi}{\mathrm{i}}
\newcommand{\Real}{\mathbb{R}}
\newcommand{\mr}{\mathrm}
\newcommand{\1}{\mathds{1}}
\newcommand{\ra}{\rangle}
\newcommand{\la}{\langle}
\newcommand{\LII}{L^2(\mathbb{R}_+;\mathscr{Z})}
\newcommand{\til}{\widetilde}
\newcommand{\RE}{\operatorname{Re}}
\newcommand{\IM}{\operatorname{Im}}
\newcommand{\Tr}{\operatorname{Tr}}
\newcommand{\abs}[1]{\left\vert#1\right\vert}
\newcommand{\norm}[1]{\left\Vert#1\right\Vert}
\newcommand{\Dom}{\mathrm{Dom}}
\theoremstyle{plain}
\newtheorem{theorem}{Theorem}
\newtheorem{corollary}[theorem]{Corollary}
\newtheorem{lemma}[theorem]{Lemma}
\newtheorem{proposition}[theorem]{Proposition}
\theoremstyle{definition}
\newtheorem{definition}{Definition}
\newtheorem{remark}{Remark}
\newtheorem{hypo}{Hypothesis}
\begin{document}


\title{Quantum stochastic differential equations and continuous measurements: unbounded coefficients}

\author{Ricardo Castro Santis\footnote{RCS is supported in part by FONDECYT Grants 3090055 and Bicentennial
Foundation Grants PBCT-ACT-13 ``Laboratorio de An\'alisis Estoc\'astico''}
\\
Centro de An\'alisis Estoc\'astico y Aplicaciones,\\ Pontificia Universidad Cat\'olica de Chile\\
Avda. Vicu\~na Mackenna 4860, Macul, Santiago Chile; \\ ricardo.castro@anestoc.cl
\\ \\
Alberto Barchielli\footnote{Also: Istituto Nazionale di Fisica Nucleare, Sezione di Milano}
\\
Politecnico di Milano, Dipartimento di Matematica\\
Piazza Leonardo da Vinci 32, I-20133 Milano, Italy\\
Alberto.Barchielli@polimi.it}

\maketitle

\begin{abstract}
A natural formulation of the theory of quantum measurements in continuous time is based on
quantum stochastic differential equations (Hudson-Parthasarathy equations). However, such a
theory was developed only in the case of Hudson-Parthasarathy equations with bounded
coefficients. By using some results on Hudson-Parthasarathy equations with unbounded
coefficients, we are able to extend the theory of quantum continuous measurements to cases in
which unbounded operators on the system space are involved. A significant example of a quantum
optical system (the degenerate parametric oscillator) is shown to fulfill the hypotheses
introduced in the general theory.

\end{abstract}

\section{Introduction}

A powerful formulation of the quantum theory of measurements in continuous time is based on
quantum stochastic calculus \cite{Hud-Partha,Partha}. In such an approach, the quantum
stochastic Schr\"odinger equation, or Hudson-Parthasarathy equation (HP-equation), is combined
with suitable field observables \cite{BarL85JMP,Bar86PR,BarSpringer,Bel88,Belavkin}; the
resulting formulation is particularly suited for applications in quantum optics and for
building up a photon detection theory \cite{Bar90QO,BarSpringer,Carm08,ZolG97,GarZ00}.
However, the theory is fully developed only for the case in which the HP-equation involves
only bounded operators in the initial Hilbert space. Many results are known on the existence
and uniqueness of the solution of the HP-equation with unbounded coefficients
\cite{Fagnola06,Fagnola99,FagW03}; our aim is to combine these results with the equations for
the (unbounded) field observables and to show how to arrive to the key evolution equation
\eqref{foreward} of the theory of continuous measurements, which concerns the ``reduced
characteristic operator'' \eqref{eq:defG}. Moreover, for applications, it is important to
consider the case in which the initial state of the quantum fields is not only the vacuum, but
at least a generic coherent vector. This gives that the reduced dynamics is not a quantum
dynamical semigroup and we need to handle a master equation with a time-dependent, unbounded
Liouville operator. Finally, a relevant physical example is given: the degenerate parametric
oscillator \cite{Carm08}. The paper is based on Castro's PhD thesis \cite{Castro}.

For any separable complex Hilbert space $\mathfrak{h}$ let us introduce the following classes
of operators on it: $\Lscr(\mathfrak{h})$, the space of bounded linear operators,
$\Uscr(\mathfrak{h})$ the class of the unitary operators, $\Tscr(\mathfrak{h})$ the
trace-class, $\Sscr(\mathfrak{h}):=\bigl\{\rho\in\Tscr(\mathfrak{h}):\rho\ge0$,
$\Tr\{\rho\}=1\bigr\}$ the set of statistical operators.

Then, we introduce the symmetric Fock space  $\mathscr{F}$ over $L^2(\mathbb{R}_+;\Zscr)$,
where $\Zscr$ is a $d$-dimensional complex Hilbert space (the \emph{multiplicity space}) in
which we fix a complete orthonormal system $\{z_i,\ i=1,\ldots,d\}$. We denote by $e(f)$ the
\emph{exponential vector} in the Fock space $\Fscr$ associated with the test function $f\in
L^2(\mathbb{R}_+;\Zscr)$ and we call \emph{coherent vector} $\psi(f):=\norm{e(f)}^{-1}e(f)$.
Recall that $\langle e(g)|e(f)\rangle = \exp \langle g|f\rangle$. We assume familiarity with
such notions and with quantum stochastic calculus \cite{Partha}. We shall use the notation
$f_i(t):=\la z_i|f(t)\ra$ for all $i\geq 1$ and we set $f_0(t)=1$. We fix the sets \[
\Mscr=L^2(\mathbb{R}_+;\Zscr)\cap L^\infty_{\mr{loc}}(\mathbb{R}_+;\Zscr) \qquad \text{and}
\qquad \Escr=\text{linear span of } \{e(f): f \in \Mscr\}.
\] The set $\Escr$ is dense in $\Fscr$ \cite[Corollary 19.5 p.\ 127]{Partha}.

An important feature of the Fock space $\Fscr$ is its structure of continuous tensor product.
For any choice of the times $0 \le s\le t$ let us introduce the symmetric Fock space $
\Fscr_{(s,t)}$ over ${L}^2 ((s,t);\Zscr)$ and the symmetric Fock space $\Fscr_{(t}$ over
${L}^2 ((t,\infty);\Zscr)$. Then, we have the natural identifications
\begin{equation}\label{eq:Fock-fac}
\Fscr\simeq \Fscr_{(0,s)}\otimes  \Fscr_{(s,t)}\otimes   \Fscr_{(t} \qquad \textrm{and}
\qquad e(f)\simeq e(f_{(0,s)})\otimes  e(f_{(s,t)})\otimes  e(f_{(t})\,,
\end{equation}
where $f_{(s,t)}(x):=1_{(s,t)}(x)f(x)$ and $ f_{(t}(x):=1_{(t,\infty)}(x)f(x)$. The symbol
$\otimes $ denotes the tensor product of Hilbert spaces, vectors and operators; the algebraic
tensor product of dense spaces is denoted by $\odot$.

The \emph{Weyl operator}\cite{Partha} $W(g;U)$, with $g\in\LII$ and $U\in
\Uscr\big(\LII\big)$, is the unique unitary operator on $\Fscr$ defined by
\begin{equation}\label{def:Weyl}
W(g;U)\,e(f)=\exp\Bigl\{-\frac{1}{2}\|g\|^2-\la
g|\ U f\ra\Bigr\}\,e(Uf+g),\qquad \forall f\in\LII .
\end{equation}
From the definition one obtains the relations \[ W(g;U)^{-1}=W(g;U)^* =W(-U^*g;U^*) \] and the
composition law
\begin{equation}\label{eq:Weyl-composition}
W(h;V)W(g;U)=\exp \big\{-\rmi\IM\la h|Vg\ra\big\}W(h+Vg;VU).
\end{equation}

Finally, we denote by $A_i^\dagger(t)$, $\Lambda_{ij}(t)$, $A_j(t)$ the creation, gauge and
annihilation processes associated with the basis $\{z_i,\ i=1,\ldots,d\}$; we shall use also
the notation
\begin{equation}\label{Lambdaij}
\Lambda_{i0}(t)=A^\dag_i(t), \quad \Lambda_{0j}(t)=A_j(t), \quad \Lambda_{00}(t)=
t, \qquad i,j=1,\ldots,d.
\end{equation}
In particular we have $\la e(g)|\Lambda_{ij}(t)e(f)\ra= \int_0^t \overline{g_i(s)}\,f_j(s)\rmd
s \,\la e(g)|e(f)\ra$, $i,j=0,\ldots,d$.

Let $\Hscr$ be a separable complex Hilbert space, the \emph{initial space}, and let us call
$S_{\Hscr}$ the quantum system with Hilbert space $\Hscr$.

We refer to \cite{Partha,Fagnola06} for the definition of quantum stochastic integrals with
respect to the operator noises $\Lambda_{ij}$, but we need to report at least the notions of
adapted process and stochastic integrability.

\begin{definition}[\!{\!\cite[p.\ 180]{Partha}, \cite[Definition 2.1]{Fagnola06}}] \label{defi:adap-proc}
Let ${D}$ be a dense manifold in $\Hscr$. A family $\{L(t), t \ge 0 \}$ of operators in
$\Hscr\otimes  \Fscr$ is an \emph{adapted process} with respect to $({D},\Mscr)$ if (i)
$D\odot\Escr\subset \bigcap_{t\ge0}\mr{Dom}(L(t))$, (ii) the map $t\mapsto L(t)u\otimes  e(f)$
is strongly measurable, $\forall u\in {D},\ f\in \Mscr$, (iii) $L(t)u\otimes  e(f_{(0,t)}) \in
\Hscr\otimes  \Fscr_{(0,t)}$ and $ L(t)u\otimes  e(f)= \big(L(t)u\otimes  e(f_{(0,t)})\big)
\otimes e(f_{(t})$, $\forall t \ge 0, u\in {D}, f\in\Mscr$.

If additionally the map $t\mapsto L(t)u\otimes e(f)$ is continuous for every $u\in {D}$ and
$f\in\Mscr$ the process is said to be \emph{regular adapted}. Moreover, the adapted process
$L$ is said to be \emph{stochastically integrable} if, for all $ t\ge 0$, $u\in {D}$ and
$f\in\Mscr$, one has $ \int_0^t\|L(s)u\otimes e(f)\|^2\rmd s < \infty$.
\end{definition}

A key notion in the construction of dilations of quantum dynamical semigroups is the one of
cocycle \cite{Acc78}. We introduce the strongly continuous one-parameter semigroup
$\{\theta(t),\,t\geq 0\}$ of the shift operators on $\mr{L}^2(\mathbb{R}_+;\Zscr)$ and its
second quantisation $\Theta$ on $\Fscr$: for every $t\geq 0$
\begin{equation}\label{eq:Theta}
(\theta_tf)(x)=f(x+t) \qquad \textrm{and} \qquad
\Theta_te(f)=e(\theta_tf), \qquad \forall f \in
\mr{L}^2(\mathbb{R}_+;\Zscr).
\end{equation}
Let us note that, for $r<s$, $\left(\theta_t1_{(r,s)}\right)(x)=1_{(r,s)}(x+t)=
1_{(r-t,s-t)}(x)$; this implies
\[
\Theta_te\big(f_{(0,s)}\big)=e(0), \quad \text{for } 0<s\leq t, \qquad \Theta_t \Fscr_{(r,s)}\subset
\Fscr_{(r-t,s-t)}\quad \text{for } 0\leq t \leq r <s.
\]
Moreover, it turns out that $\Theta_t^{\,*}$ is an isometry. We extend $\Theta_t$ to the space
$\Hscr\otimes \Fscr$ by stipulating that it acts as the identity on $\Hscr$.

\begin{definition}[Right and left cocycles]\label{defi:cocycle}
A bounded, adapted operator process $X(t)$ in $\Hscr\otimes \Fscr$ is called \emph{right
cocycle} (respectively,  \emph{left cocycle}) if for every $s,t \ge 0$ we have
$X(t+s)=\Theta^{\,*}_sX(t)\Theta_sX(s)$ $\big(X(t+s)=X(s)\Theta^{\,*}_sX(t)\Theta_s\big)$.
\end{definition}

\section{The Hudson-Parthasarathy equations}

Let us consider the quantum stochastic differential equation (QSDE) for operators on
$\Hscr\otimes \Fscr$, known as \emph{right} HP-equation: $U(0)=\1$,
\begin{equation} \label{eq:right}
\rmd U(t)=\displaystyle \Big(\sum_{i\ge1} R_i \rmd A_i^{\dag}(t)
+ \sum_{i,j\ge1} F_{ij}\rmd\Lambda_{ij}(t)+\sum_{j\ge1} N_j
\rmd A_j(t) + K\rmd t \Big)U(t),
\end{equation}
where the coefficients $K,\ R_i,\ N_i,\ F_{ij}$, with $i,j= 1,\ldots,d$, are (possibly
unbounded) operators in the initial space $\Hscr$. Very general sufficient conditions, which
guarantee the existence of a unique solution of \eqref{eq:right} and the fact that such a
solution is a \emph{unitary cocycle}, are given by Fagnola and Wills \cite{FagW03}.

By using the notation \eqref{Lambdaij} and by setting
\begin{equation}
F_{00}=K\,, \quad F_{i0}=R_i\,, \quad F_{0j}=N_j\,,
\end{equation}
we  can write the right HP-equation in the shortened form
\begin{equation}\label{eq:rHP}
\rmd U(t)= \sum_{i,j\ge 0}F_{ij}\, \rmd\Lambda_{ij}(t)\, U(t),\qquad
U(0)=\1.
\end{equation}
We shall need also the adjoint equation, the \emph{left HP-equation}:
\begin{equation}\label{eq:lHP}
\rmd V(t)= V(t) \sum_{i,j\ge 0}F_{ji}^{\;*}\, \rmd\Lambda_{ij}(t),\qquad
V(0)=\1.
\end{equation}

\begin{definition}[{Right Solution -- \cite[Definition 3.2]{Fagnola06}}]\label{defi:R-S}
Let ${D}$ be a dense subspace in $\Hscr$. An operator process $U$ is a \emph{solution of the
right HP-equation} in ${D} \odot  \Escr$ for the matrix $F$ if:
\begin{description}
\item[(i)] each operator $F_{ij}\otimes \1$ is closable and $\displaystyle \bigcup_{t\ge0}
    U(t)({D}\odot  \Escr) \subset \bigcap_{i,j\ge0}\textrm{Dom}(\overline{F_{ij}\otimes
    \1})$;

\item[(ii)] each process $\overline{F_{ij}\otimes \1}\ U$ is stochastically integrable and
    \[ U(t)=\1+\sum_{i,j\ge 0}\int_0^t \overline{F_{ij}\otimes \1}\;
    U(s)\,\rmd\Lambda_{ij}(s) \qquad \text{on } D \odot \Escr, \quad \forall t\ge 0.\]
\end{description}
\end{definition}

\begin{definition}[{Left Solution -- \cite[Definition 3.1]{Fagnola06}}]\label{defi:L-S}
Let ${\til{D}}$ be a dense subspace in $\Hscr$. An operator process $V$ is a \emph{solution of
the left HP-equation} in $\til{{D}} \odot \Escr$ for the matrix $F^*$ if:
\begin{description}
\item[(i)] $ {\til{D}} \subset \bigcap_{i,j\geq 0}\textrm{Dom}(F_{ij}^{\;*})$ and the
    linear manifold $\left(\bigcup_{i,j\geq 0}
    F_{ij}^{\;*}\left({\til{D}}\right)\right)\odot \mathcal{E}$ is contained in the domain
    of $V(t)$, $\forall t \geq 0$;

\item[(ii)] the processes $\big(V(t)F_{ij}^{\;*};t\ge 0\big)$ are stochastically
    integrable and
\[ V(t)=\1+\sum_{i,j\ge 0}\int_0^tV(s)F_{ji}^{\;*}\,
    \rmd\Lambda_{ij}(s) \qquad \text{on } \til{D} \odot \Escr, \quad \forall t\ge
    0.\]
\end{description}
\end{definition}

\begin{hypo}\label{hyp:C} (The matrix $F$)

\begin{description}
\item[(i)] $F=(F_{ij};0\leq i,j\leq d)$ is a matrix of closed operators in the initial
    space $\Hscr$.  By $F^*$ we denote the adjoint matrix, defined by
    $(F^*)_{ij}=F^*_{ji}$. We also define \ $\Dom(F):= \bigcap_{i,j\geq
    0}\textrm{Dom}(F_{ij})$, \quad$\Dom(F^*):= \bigcap_{i,j\geq 0}\textrm{Dom}(F_{ji}^*)$
\item[(ii)] For $1\leq i,j\leq d$, we have $F_{ij}=S_{ij}- \delta_{ij}\1$, where the
    $S_{ij}$ are bounded operators on $\Hscr$ satisfying the unitarity conditions \quad$
    \sum_{k=1}^d S^*_{ki}S_{kj}=\sum_{k=1}^d S_{ik}S^*_{jk} = \delta_{ij} $.
\item[(iii)] There exist a dense subspace $D$ which is a core for $K,\, R_i,\, N_i$,
    $i=1,\ldots,d$, and a dense subspace $\til D$ which is a core for $K^*,\, R_i^*,\,
    N_i^*$, $i=1,\ldots,d$.
\item[(iv)] $\mathrm{Dom}(N_i^*)\supset D\cup \til D$, \ $\mathrm{Dom}(R_i) \supset D\cup
    \til D$, \ $\mathrm{Dom}(N_i)\supset\mathrm{Dom}(K)$, \ $ \forall i\geq 1$.
\item[(v)] $\forall k\ge1$, $\forall u\in \mathrm{Dom}(K)$:
    $S_{ki}u\in\mathrm{Dom}(R^*_k)$, $\forall i\ge1$.
\item[(vi)] The operators $K$ and $K^*$ are the infinitesimal generators of two strongly
    continuous contraction semigroups on $\Hscr$. Moreover, we have $\forall u\in {D}$,
    $\forall v\in\til{{D}}$
\begin{equation}\label{dissipativity}
2\mr{Re}\la Ku|u\ra=-\sum_{k\ge1}\|R_ku\|^2 , \qquad
2\mr{Re}\la K^*v|v\ra=-\sum_{k\ge1}\|N^*_{k}v\|^2.
\end{equation}
\item[(vii)] $ N_i^*u=- \sum_{k\ge1} S_{ki}^*R_ku\,, \qquad  \forall u \in D\cup \til D\,,
    \quad \forall i\geq 1$.

\item[(viii)] There exist a positive self-adjoint operator $C$ on $\Hscr$ and the
    constants $\delta> 0$ and $b_1,\,b_2 \ge 0$ such that \cite[pp.\ 281--291]{FagW03}
    $\mr{Dom}(C^{1/2})\subset \mr{Dom}(F)$ and
\begin{description}
\item[(a)] for each $\epsilon\in (0,\delta)$, there exists a dense subspace ${D}_\epsilon
    \subset\til{D}$ such that $C_\epsilon^{1/2} {D}_\epsilon\subset\til{D}$ and each
    operator $F_{ij}^*C_\epsilon^ {1/2}|_{{D}_{\epsilon}}$ is bounded, where
    $C_\epsilon=\frac{C}{(1+\epsilon C)^2}$;
\item[(b)] for all $0<\epsilon<\delta$ and $u_0,\ldots,u_d\in \mr{Dom}(F)$, the following
    inequality holds:
\begin{multline*}
\sum_{i,j\geq0}\left( \la u_i|C_\epsilon F_{ij}u_j\ra +\la F_{ji}u_i|C_\epsilon u_j\ra +
\sum_{k\geq 1}\la F_{ki}u_i|C_\epsilon F_{kj}u_j\ra\right)
\\ {}\leq  \sum_{i\geq 0}\left(b_1
\la u_i| C_\epsilon u_i\ra+ b_2 \norm{u_i}^2\right).
\end{multline*}
\end{description}
\end{description}
\end{hypo}

\begin{proposition}\label{prop:moreonF}
Under Hypothesis \ref{hyp:C} also the following properties hold:
\begin{enumerate}
\item $\mathrm{Dom}(R_k)\supset \mathrm{Dom}(K)\cup \mathrm{Dom}(K^*)$,
    $\mathrm{Dom}(N^*_k)\supset \mathrm{Dom}(K)\cup\mathrm{Dom}(K^*)$,  $k\geq 1$.

\item $\mathrm{Dom}(F)=\mathrm{Dom}(K)$; Eqs.\ \eqref{dissipativity} hold $\forall u\in
    \mathrm{Dom}(K)$, $\forall v\in \mathrm{Dom}(K^*)$.

\item $N_i^*u=- \sum_{k\ge1} S_{ki}^*R_ku$, $R_iu=-\sum_{k\ge1} S_{ik}N_k^*u$, $\forall u
    \in \mathrm{Dom}(K)\cup \mathrm{Dom}(K^*)$, $\forall i\geq 1$.

\item $ N_iu=-\sum_{k\ge1}R^*_kS_{ki}u \,, \qquad \forall u\in \mathrm{Dom}(K), \quad
    \forall i=1,\dots,d $.

\item for every choice of $u_0,\,u_1,\ldots,\,u_d$ in $\mathrm{Dom}(F)$ and of
    $v_0,\,v_1,\ldots,\,v_d$ in $\til D$, we have
\begin{subequations}\label{unitconds}
\begin{gather}\label{thetaF=0}
\sum_{i,j\geq 0}\left( \la u_i|F_{ij}u_j\ra + \la F_{ji}u_i|u_j\ra + \sum_{k\geq 1} \la
F_{ki}u_i|F_{kj}u_j\ra\right) =0.
\\
\sum_{i,j\geq 0}\left( \la v_i|F_{ji}^{\;*}v_j\ra + \la F_{ij}^{\;*}v_i|v_j\ra +
\sum_{k\geq 1} \la F_{ik}^{\;*}v_i|F_{jk}^{\;*}v_j\ra\right) =0.
\end{gather}
\end{subequations}
\end{enumerate}
\end{proposition}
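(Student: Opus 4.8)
The strategy throughout is to transfer each assertion from the cores $D$ and $\til D$, on which Hypothesis~\ref{hyp:C} is formulated, to the full domains $\Dom(K)$ and $\Dom(K^*)$, using closedness of the coefficients together with the exact dissipativity identities~\eqref{dissipativity}. I would begin with the inclusions $\Dom(K)\subset\Dom(R_k)$ and $\Dom(K^*)\subset\Dom(N_k^*)$. For $w\in D$ the first identity in~\eqref{dissipativity} gives $\sum_k\norm{R_kw}^2=-2\RE\la Kw|w\ra\le2\norm{Kw}\,\norm{w}$; hence if $u\in\Dom(K)$ and $u_n\in D$ approximates $u$ in the $K$-graph norm (possible since $D$ is a core for $K$), then $u_n-u_m\in D$ forces $\sum_k\norm{R_k(u_n-u_m)}^2\to0$, and closedness of each $R_k$ yields $u\in\Dom(R_k)$ with $R_ku_n\to R_ku$, so that~\eqref{dissipativity} extends to all of $\Dom(K)$. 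The symmetric argument on $\til D$ gives $\Dom(K^*)\subset\Dom(N_k^*)$ and the second identity in~\eqref{dissipativity} on $\Dom(K^*)$.

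Next I would prove the two relations of item~3 and complete item~1. Hypothesis~\ref{hyp:C}(vii) reads $N_i^*u=-\sum_k S_{ki}^*R_ku$ on $D\cup\til D$; multiplying by $S_{\ell i}$, summing over $i$, and using the coisometry relation $\sum_i S_{\ell i}S_{ki}^*=\delta_{\ell k}$ inverts it to $R_\ell u=-\sum_i S_{\ell i}N_i^*u$ on $D\cup\til D$. Closedness of $N_i^*$ and of $R_\ell$, together with the graph-norm convergences from the previous step, lets the first relation extend to $\Dom(K)$ (whence $\Dom(K)\subset\Dom(N_i^*)$) and the second to $\Dom(K^*)$ (whence $\Dom(K^*)\subset\Dom(R_\ell)$); substituting these back extends both relations to all of $\Dom(K)\cup\Dom(K^*)$, completing item~1. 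Since the blocks $F_{ij}$ with $i,j\ge1$ are bounded and $\Dom(K)\subset\Dom(N_j)$ by~\ref{hyp:C}(iv), the inclusions just obtained give $\Dom(F)=\Dom(K)\cap\bigcap_i\Dom(R_i)\cap\bigcap_j\Dom(N_j)=\Dom(K)$, which is item~2.

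For item~4 I would argue by duality: fix $u\in\Dom(K)$ and $v\in\til D$ and compute $\la N_iu|v\ra=\la u|N_i^*v\ra=-\sum_k\la u|S_{ki}^*R_kv\ra=-\sum_k\la S_{ki}u|R_kv\ra$; since~\ref{hyp:C}(v) guarantees $S_{ki}u\in\Dom(R_k^*)$, this equals $-\sum_k\la R_k^*S_{ki}u|v\ra$, and density of $\til D$ gives $N_iu=-\sum_k R_k^*S_{ki}u$.

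Finally, for the unitarity identities~\eqref{unitconds} I would recast the left side of~\eqref{thetaF=0} as $2\RE\sum_{i,j\ge0}\la u_i|F_{ij}u_j\ra+\sum_{k\ge1}\norm{\sum_{i\ge0}F_{ki}u_i}^2$, insert the block form $F_{00}=K$, $F_{i0}=R_i$, $F_{0j}=N_j$, $F_{ij}=S_{ij}-\delta_{ij}\1$, and simplify using the isometry relation $\sum_k S_{ki}^*S_{kj}=\delta_{ij}$, the dissipativity identity for $K$ on $\Dom(K)$, the relation $\sum_k S_{ki}^*R_ku_0=-N_i^*u_0$ from item~3, and the adjoint identity $\la N_i^*u_0|u_i\ra=\la u_0|N_iu_i\ra$ (valid because $\Dom(K)\subset\Dom(N_i)$). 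A term-by-term comparison then shows that every contribution cancels, giving the first identity; the second follows by running the same computation for the adjoint matrix $F^*$, whose entries satisfy the dual form of Hypothesis~\ref{hyp:C} under $K\leftrightarrow K^*$, $R_i\leftrightarrow N_i^*$, $S_{ij}\leftrightarrow S_{ji}^*$. I expect the main obstacle to be not any single estimate but the bookkeeping of the two interlocking domain extensions in item~3 and the need to keep every adjoint manipulation in the last step inside the domains already secured.
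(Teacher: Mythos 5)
Your proposal is correct and follows essentially the same route as the paper: extend the dissipativity identities and domain inclusions from the cores $D$, $\til D$ to $\mathrm{Dom}(K)$, $\mathrm{Dom}(K^*)$ via graph-norm approximation and closedness, invert Hypothesis~\ref{hyp:C}(vii) with the unitarity of $S$ to get the cross inclusions and point~(3), deduce point~(2), obtain point~(4) by an adjoint/density argument using~(v), and verify~\eqref{unitconds} by direct expansion. Your Cauchy estimate $\sum_k\norm{R_k(u_n-u_m)}^2\le 2\norm{K(u_n-u_m)}\,\norm{u_n-u_m}$ from graph-norm convergence is in fact a slightly cleaner version of the paper's step, which bounds $\norm{Ku_n}$ uniformly via weak convergence instead.
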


\begin{proof}
By \eqref{dissipativity} we get, $\forall \phi\in D$, $\|R_k\phi\|^2 \leq 2|\la
K\phi|\phi\ra|$. For any $u\in \mathrm{Dom}(K)$ we can find a sequence $u_n\in D$ converging
to $u$. Then, $Ku_n\to Ku$ weakly and by the proposition at p.\ 112 of [\citen{ReedS}] the
sequence $Ku_n$ is norm bonded: $\|Ku_n\| \leq c$. Then, we have $ \|R_k(u_n-u_m)\|^2 \leq 2
|\la K(u_n-u_m)|u_n-u_m\ra| \leq 4 c \|u_n-u_m\|$. Therefore, $R_ku_n$ is a Cauchy sequence.
Being $R_k$ closed, $\left(u, \lim_n R_k u_n\right)$ belongs to the graph of $R_k$ and, so,
$u\in \mathrm{Dom}(R_k)$; this gives $\mr{Dom}(R_k)\supset \mr{Dom}(K)$. By the same property
of closed operators, we get that the first equation in \eqref{dissipativity} can be extended
to the whole $\mr{Dom}(K)$. Similarly, from the second equation in \eqref{dissipativity} we
get that it can be extended to the whole $\mr{Dom}(K^*)$ and that $\mr{Dom}(N^*_k)\supset
\mr{Dom}(K^*)$. Once again by the property above of closed operators and by the unitarity of
the operator matrix $S$, we get that point (vii) can be extended to
$\mr{Dom}(K)\cup\mr{Dom}(K^*)$. Therefore, on the same domain, $\sum_{k\ge1}\|R_{k}u\|^2=
\sum_{k\ge1}\|N^*_{k}u\|^2$. By exchanging $R_k$ and $N_k^*$ in the two equations in
\eqref{dissipativity} we get also Dom$(R_k)\supset \mathrm{Dom}(K^*)$ and Dom$(N^*_k)\supset
\mathrm{Dom}(K)$. By these results and the definition of Dom$(F)$ we get
$\mathrm{Dom}(F)=\mathrm{Dom}(K)$. This ends the proof of points (1)--(3).

By using the first equation in point (3) of this proposition and point (v) in Hypothesis
\ref{hyp:C} we have, $\forall u\in\mr{Dom}(K)$, $\forall v\in\mr{Dom}(K)\cup\mr{Dom}(K^*)$,
\[
\la v|N_i u\ra=\la N^*_iv| u\ra =-\sum_{k\ge1}\big\la S^*_{ki}R_k v\big|u\big\ra =
-\sum_{k\ge1}\big\la R_k v\big|S_{ki}u\big\ra=-\sum_{k\ge1}\big\la  v\big|R^*_kS_{ki}u\big\ra.
\]
By the density of $\mr{Dom}(K)\cup\mr{Dom}(K^*)$ in $\Hscr$ we have the statement in point
(4). Equations in point (6) are by direct verification.
\end{proof}

Note the difference in the domains of Eqs.\ \eqref{unitconds}. The last requirement in point
(iv) has been added just to have Eq.\ \eqref{thetaF=0} on the whole $\mathrm{Dom}(F)$ and not
only on $D$. As one can check, also Hypothesis \textbf{HGC} at p.\ 205 of [\citen{Fagnola06}]
holds for the coefficients of the left HP-equation \eqref{eq:lHP}. We collect in the following
theorem many results.

\begin{theorem}[\!{\!\cite[Proposition 2.2, Theorem 2.3]{FagW03}; \cite[Proposition 6.3,
Theorems 8.4, 8.5]{Fagnola06}}] \label{teo:SolutionHPequation} Under Hypothesis \ref{hyp:C}
the left HP-equation \eqref{eq:lHP} has a unique solution $\{V(t);\, t\geq 0\}$ on $\til
D\odot \Escr$, which is a strongly continuous left cocycle of contractions. Moreover,
$U(t)=V(t)^*$ is a right cocycle and solves the right HP-equation on
$\mathrm{Dom}(C^{1/2})\odot \Escr$.
\end{theorem}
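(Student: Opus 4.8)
The plan is to recognize that this theorem is essentially a packaging of cited results (Fagnola–Wills and Fagnola), so the proof reduces to verifying that Hypothesis~\ref{hyp:C}, enriched by Proposition~\ref{prop:moreonF}, matches the precise hypotheses required by those references. First I would address the left HP-equation \eqref{eq:lHP}: I would check that the coefficient matrix $F^*$ satisfies the hypotheses of the existence-and-uniqueness results quoted from \cite{Fagnola06}. The remark immediately preceding the theorem already asserts that Hypothesis \textbf{HGC} at p.~205 of \cite{Fagnola06} holds for the coefficients of \eqref{eq:lHP}; the work is to see that the core $\til D$, the domain conditions in Hypothesis~\ref{hyp:C}(iii)--(iv), the form of $F_{ij}=S_{ij}-\delta_{ij}\1$ for $1\le i,j\le d$ from (ii), and the dissipativity/unitarity identities \eqref{dissipativity} and \eqref{unitconds} are exactly what \cite[Proposition~6.3, Theorems~8.4, 8.5]{Fagnola06} require. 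Those theorems then yield a unique solution $V(t)$ on $\til D\odot\Escr$ that is a strongly continuous contraction and a left cocycle.

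Next I would handle the cocycle property and the passage to the adjoint. That $V$ is a \emph{left} cocycle in the sense of Definition~\ref{defi:cocycle} should come directly from the cocycle statement in \cite{Fagnola06}; I would just confirm the shift-operator conventions \eqref{eq:Theta} agree. Taking adjoints, $U(t):=V(t)^*$ is bounded (indeed a contraction) and the relation $V(t+s)=V(s)\Theta^{\,*}_sV(t)\Theta_s$ transforms, using that $\Theta_s$ acts as identity on $\Hscr$ and that $\Theta_s^{\,*}$ is an isometry, into the right-cocycle identity $U(t+s)=\Theta^{\,*}_sU(t)\Theta_sU(s)$; this is a short computation with the composition of adjoints and does not require unboundedness arguments.

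The substantive part is showing that $U(t)=V(t)^*$ actually \emph{solves} the right HP-equation \eqref{eq:rHP} on the larger domain $\mr{Dom}(C^{1/2})\odot\Escr$, in the sense of Definition~\ref{defi:R-S}. Here I would invoke \cite[Proposition~2.2, Theorem~2.3]{FagW03}, whose hypotheses are precisely Hypothesis~\ref{hyp:C}(viii): the positive self-adjoint operator $C$ with $\mr{Dom}(C^{1/2})\subset\mr{Dom}(F)$, the regularized operators $C_\epsilon=C/(1+\epsilon C)^2$, the boundedness of $F_{ij}^*C_\epsilon^{1/2}$ on $D_\epsilon$ in (viii)(a), and the form-bound inequality in (viii)(b). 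These are the conditions under which Fagnola–Wills prove that the minimal solution is isometric/unitary and that the adjoint process is itself a genuine solution of the right equation on the $C^{1/2}$-domain. I would verify that the closability of $F_{ij}\otimes\1$ and the inclusion $\bigcup_t U(t)(\mr{Dom}(C^{1/2})\odot\Escr)\subset\bigcap_{i,j}\mr{Dom}(\overline{F_{ij}\otimes\1})$ demanded by Definition~\ref{defi:R-S}(i) follow from the $C$-quadratic-form estimate, and that stochastic integrability in (ii) follows from the same bound.

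I expect the main obstacle to be bookkeeping rather than a single deep step: namely reconciling the two citation frameworks, since \cite{Fagnola06} gives existence/uniqueness and the cocycle structure for the left equation, while \cite{FagW03} supplies the $C$-operator machinery guaranteeing that the adjoint solves the right equation on $\mr{Dom}(C^{1/2})\odot\Escr$. The delicate point is the \emph{domain enlargement} from $\til D$ (on which $V$ solves the left equation) to $\mr{Dom}(C^{1/2})$ (on which $U$ solves the right equation): one must check that the two notions of ``solution'' in Definitions~\ref{defi:R-S} and~\ref{defi:L-S} are compatible and that the $C_\epsilon$-limiting arguments of \cite{FagW03} legitimately extend the identity from cores to the full $C^{1/2}$-domain. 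I would treat this compatibility, together with confirming that our Hypothesis~\ref{hyp:C}(viii) is literally the hypothesis of \cite[Theorem~2.3]{FagW03}, as the crux, everything else being a translation of notation.
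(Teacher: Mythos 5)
Your proposal is correct and matches the paper's treatment: the paper offers no proof of Theorem \ref{teo:SolutionHPequation} at all, presenting it as a collection of results imported from \cite{FagW03} and \cite{Fagnola06} after the one-line remark that Hypothesis \textbf{HGC} of \cite{Fagnola06} can be checked to hold, which is exactly the hypothesis-matching exercise you outline (left-equation existence, uniqueness and cocycle structure from \cite{Fagnola06}; the $C$-operator machinery of Hypothesis \ref{hyp:C}(viii) from \cite{FagW03} to make the adjoint a genuine solution of the right equation on $\mathrm{Dom}(C^{1/2})\odot\Escr$). The only caveat is a slight overstatement on your part: isometry/unitarity of $U$ is not part of this theorem but is handled separately in Proposition \ref{teo:U-isometry} and Corollary \ref{cor:unitary}.
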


As we take $U=V^*$, if $V$ is an isometry process, $U$ is a coisometry process and vice versa.
The following Proposition is a small variation of Corollary 2.4 of [\citen{FagW03}] or of
Corollary 11.2 of [\citen{Fagnola06}].

\begin{proposition}\label{teo:U-isometry}
Under Hypothesis \ref{hyp:C} the contractive solution $U$ of \eqref{eq:rHP} introduced in
Theorem \ref{teo:SolutionHPequation} is a strongly continuous isometric process. Moreover, if
$U$ is unitary, it is the unique bounded solution on $\mathrm{Dom}(C^{1/2})\odot\Escr$ of
\eqref{eq:rHP}.
\end{proposition}
\begin{proof}
Let $\til U$ be another bounded solution and apply the second fundamental formula of QSC
(\!\!\cite[Proposition 25.2]{Partha}, \cite[Eq.\ (2)]{Fagnola06}) to $U,\, \til U$. We get,
$\forall f,g \in \Mscr$, $\forall u,v\in \mathrm{Dom}(C^{1/2})$,
\begin{multline*}
\big\la \til U(t)v\otimes e(g)\big| U(t)u\otimes e(f)\big\ra - \big\la v\otimes e(g)\big|
u\otimes e(f)\big\ra
\\
{} = \sum_{i,j\ge 0}\int_0^t\rmd s\ \overline{g_i(s)}\Big\{\big\la \til U(s)v\otimes e(g)\big|
F_{ij}\otimes \1\, U(s)u\otimes e(f)\big\ra
\\
{}+ \big\la F_{ji}\otimes \1\,\widetilde{U}(s)v\otimes e(g)\big| U(s)u\otimes e(f)\big\ra
\\
{}+ \sum_{k\ge1}\big\la F_{ki}\otimes \1  \,\til U(s)v\otimes e(g)\big| F_{kj}\otimes \1
\,U(s)u\otimes e(f)\big\ra\Big\}f_j(s)\,.
\end{multline*}
But $U$ and $\til U$ are solutions on $\mathrm{Dom}(C^{1/2})\odot\Escr$; by point (i) of
Definition \ref{defi:R-S} and Eq.\ \eqref{thetaF=0}, we have that the integrand vanishes.
Therefore, $\forall f,g \in \Mscr$, $\forall u,v\in \mathrm{Dom}(C^{1/2})$, $ \big\la \til
U(t)v\otimes e(g)\big| U(t)u\otimes e(f)\big\ra = \big\la v\otimes e(g)\big| u\otimes
e(f)\big\ra$ and this is equivalent to $\til U(t)^* U(t)=\1$. This equation for $\til U=U$
gives that $U(t)$ is isometric, while for $U$ unitary gives $\til U(t)^*= U(t)^*$. Being the
adjoint of a strongly continuous process, $U$ is weakly continuous and, being an isometry, it
is strongly continuous.
\end{proof}

\section{Quantum dynamical semigroups and unitary cocycles}

For the physical interpretation of the evolution operator $U(t)$ we need it to be a strongly
continuous cocycle of unitary operators (see [\citen{BarSpringer}] Section 2.2 and references
there in). The unitarity is associated to some property of a related quantum dynamical
semigroup (QDS); so, we start with some notions on QDSs.

\begin{definition}[QDS]\label{defi:QDS}
Let us consider a family $\left\{\Tcal(t),\,t\geq 0\right\}$ of bounded operators on
$\Lscr(\Hscr)$ with the following properties:
\begin{description}
\item[(i)] $\Tcal(t+s)=\Tcal(t)\Tcal(s)$, $\forall s,t \ge0$, and $\Tcal(0)$ is the
    identity map;
\item[(ii)] $\Tcal(t)$ is completely positive, $\forall t \ge 0$;
\item[(iii)] $\Tcal(t)$ is a $\sigma$-weakly continuous operator on $\Lscr(\Hscr)$,
    $\forall t\ge0$;
\item[(iv)] for each $X\in \Lscr(\Hscr)$ the map $t \mapsto \Tcal(t)[X]$ is continuous
    with respect to the $\sigma$-weak topology of $\Lscr(\Hscr)$.
\end{description}
Then, the family of operators $\Tcal(t)$ is called a \emph{quantum dynamical semigroup}. If
also $\Tcal(t)[\1]=\1$ holds $\forall t\geq 0$, the QDS $\Tcal(t)$ is said to be \emph{Markov}
or \emph{conservative}.
\end{definition}

\begin{theorem}[\!{\!\cite[Theorem 3.22, Corollary 3.23]{Fagnola99}}]\label{teo:QDS-min}
Let $A$ be the infinitesimal generator of a strongly continuous contraction semigroup in
$\Hscr$ and let $L_k$, $k=1,\ldots$, be operators in $\Hscr$ such that the domain of each
operator $L_k$ contains the domain of $A$ and  for every $u \in \mathrm{Dom}(A)$ we have $2\RE
\la u|Au\ra+\sum_{k\ge1}\norm{L_ku}^2 = 0$.

For all $X\in \Lscr(\Hscr)$, let us consider the quadratic form $\Lcal[X]$ in $\Hscr$ with
domain $\mr{Dom}(A)\times\mr{Dom}(A)$ given by
\begin{equation}\label{eq:cuadratic-form}
\la v|\Lcal[X]u\ra=\la v|XAu\ra+\la Av|Xu\ra+\sum_{k\ge1}\la L_kv|XL_ku\ra.
\end{equation}

Then, there exists a QDS $\Tcal(t)$ solving the equation
\begin{equation}\label{eq:cuadratic-equation}
\la v|\Tcal(t)[X]u\ra=\la v|Xu\ra+\int^t_0\big\la v\big|\Lcal\big[\Tcal(s)[X]\big]u\big\ra\rmd s
\end{equation}
with the property that $\Tcal(t)[\1]\le \1$,  $\forall t \ge0$, and such that for every
$\sigma$-weakly continuous family $\Tcal^\prime(t)$ of positive maps on $\Lscr(\Hscr)$
satisfying Eqs.\ \eqref{eq:cuadratic-form} and \eqref{eq:cuadratic-equation} we have
$\Tcal(t)[X]\le\Tcal^\prime(t)[X]$, $\forall t\ge0$, for all positive $X \in \Lscr(\Hscr)$.

If moreover the QDS $\Tcal(t)$ is conservative, then it is the unique $\sigma$-weakly
continuous family of positive maps on $\Lscr(\Hscr)$ satisfying Eq.\
(\ref{eq:cuadratic-equation}).
\end{theorem}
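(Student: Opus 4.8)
The plan is to realise $\Tcal(t)$ as the dual of a minimal predual semigroup obtained by a Dyson--Phillips iteration, which is the standard construction of the minimal quantum dynamical semigroup. Let $P_t=\rme^{tA}$ denote the strongly continuous contraction semigroup generated by $A$, so that $P_t^*=\rme^{tA^*}$ is generated by $A^*$. The formal predual generator is $\rho\mapsto A\rho+\rho A^*+\sum_k L_k\rho L_k^*$ on $\Tscr(\Hscr)$, and I would split it into the free part $F_t[\rho]:=P_t\rho P_t^*$ and the completely positive perturbation $J[\rho]:=\sum_k L_k\rho L_k^*$. The Duhamel formula then motivates the iteration
\begin{align*}
\Tcal_*^{(0)}(t)[\rho]&=P_t\rho P_t^*,\\
\Tcal_*^{(n+1)}(t)[\rho]&=P_t\rho P_t^*+\sum_k\int_0^t P_{t-s}\,L_k\,\Tcal_*^{(n)}(s)[\rho]\,L_k^*\,P_{t-s}^*\,\rmd s,
\end{align*}
where the hypothesis $\Dom(L_k)\supset\Dom(A)$ is exactly what makes each composition meaningful, first on rank-one $\rho=|u\ra\la u|$ with $u\in\Dom(A)$ and then by linearity and density. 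Every iterate is manifestly completely positive, and for $\rho\ge0$ the sequence is increasing in $n$ because $F_t$ and $J$ are positivity-preserving and monotone.

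The dissipativity identity $2\RE\la u|Au\ra+\sum_k\norm{L_ku}^2=0$ yields the free-evolution balance $\Tr F_t[\rho]=\Tr\rho-\int_0^t\Tr J[F_s[\rho]]\,\rmd s$, and combining it with the iteration one obtains, writing $Q_n(t):=\int_0^t\Tr J[\Tcal_*^{(n)}(s)[\rho]]\,\rmd s$, the exact relation
\begin{equation*}
\Tr\Tcal_*^{(n+1)}(t)[\rho]+Q_{n+1}(t)=\Tr\rho+Q_n(t).
\end{equation*}
Since $Q_n\le Q_{n+1}$ by monotonicity, this forces $\Tr\Tcal_*^{(n)}(t)[\rho]\le\Tr\rho$ for every $n$. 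A monotone increasing, trace-bounded sequence of positive trace-class operators converges in trace norm, so the limit $\Tcal_*(t)[\rho]:=\lim_n\Tcal_*^{(n)}(t)[\rho]$ exists, is completely positive, and satisfies $\Tr\Tcal_*(t)[\rho]\le\Tr\rho$. Defining $\Tcal(t)$ on $\Lscr(\Hscr)$ by duality through $\Tr\big(\rho\,\Tcal(t)[X]\big)=\Tr\big(\Tcal_*(t)[\rho]\,X\big)$ produces a completely positive, automatically $\sigma$-weakly continuous family with $\Tcal(t)[\1]\le\1$; this gives properties (ii), (iii) of Definition \ref{defi:QDS} together with the announced bound.

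The semigroup law (i) and the continuity (iv) I would verify first on the iterates, where they reduce to the semigroup property of $P_t$ and to dominated convergence in $t$, and then transfer to the monotone limit. The integral equation \eqref{eq:cuadratic-equation} comes from the fixed-point relation $\Tcal_*(t)=F_t+\int_0^t F_{t-s}\,J\,\Tcal_*(s)\,\rmd s$ inherited in the limit: pairing it with $|u\ra\la v|$ for $u,v\in\Dom(A)$ and differentiating in $t$ reproduces precisely the three terms of the form $\Lcal$ in \eqref{eq:cuadratic-form}, so that \eqref{eq:cuadratic-equation} holds on $\Dom(A)\times\Dom(A)$.

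For minimality, let $\Tcal^\prime$ be any $\sigma$-weakly continuous family of positive maps solving \eqref{eq:cuadratic-equation}. The key step is to promote its weak form to the mild (Dyson) form $\Tcal^\prime_*(t)=F_t+\int_0^t F_{t-s}\,J\,\Tcal^\prime_*(s)\,\rmd s$; then $\Tcal^\prime_*(t)\ge F_t=\Tcal_*^{(0)}(t)$, and an induction using the positivity of $F$ and $J$ gives $\Tcal^\prime_*(t)\ge\Tcal_*^{(n)}(t)$ for all $n$, hence $\Tcal^\prime_*\ge\Tcal_*$ and, dually, $\Tcal(t)[X]\le\Tcal^\prime(t)[X]$ for positive $X$. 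For the last clause, suppose $\Tcal$ is conservative, so $\Tr\Tcal_*(t)[\rho]=\Tr\rho$; conservativity is exactly the statement that no mass escapes to infinity, and it is what forces every positive solution to satisfy $\Tr\Tcal^\prime_*(t)[\rho]\le\Tr\rho$ as well, whereupon the positive difference $\Tcal^\prime_*(t)[\rho]-\Tcal_*(t)[\rho]$ has vanishing trace and must be zero, giving $\Tcal^\prime=\Tcal$. The main obstacle throughout is the domain bookkeeping that turns these formal manipulations into rigorous ones: showing that $L_kP_s$ and the iterated integrals genuinely define trace-class operators, that the weak equation \eqref{eq:cuadratic-equation} is equivalent to the mild form used in the minimality argument, and that the monotone limit may be interchanged with the time integral and with the unbounded form $\Lcal$ on $\Dom(A)$.
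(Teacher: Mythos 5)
The paper does not prove this theorem: it is imported verbatim from Fagnola's lecture notes (Theorem 3.22 and Corollary 3.23 of the cited reference), so your proposal can only be compared with the standard construction of the minimal quantum dynamical semigroup, which is indeed what you reproduce, transposed to the predual. Fagnola's own iteration is the dual of yours: the approximants are defined directly on $\Lscr(\Hscr)$ through the quadratic forms $\la P_tv|XP_tu\ra+\sum_k\int_0^t\la L_kP_{t-s}v|\Tcal^{(n)}(s)[X]L_kP_{t-s}u\ra\,\rmd s$ with $u,v\in\Dom(A)$, which is precisely what makes the hypothesis $\Dom(L_k)\supset\Dom(A)$ sufficient and sidesteps the trace-class bookkeeping you flag at the end (one never needs $L_k\sigma L_k^{\,*}$ to be trace class for a general iterate $\sigma$; only the vectors $L_kP_{t-s}u$ ever appear). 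Your trace identity $\Tr\Tcal_*^{(n+1)}(t)[\rho]+Q_{n+1}(t)=\Tr\rho+Q_n(t)$ is correct and is the right way to get the sub-Markov bound from the dissipativity identity.

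Two steps are wrong or incomplete as written. First, the semigroup law cannot be ``verified first on the iterates'': a truncated Dyson series is not a semigroup, and $\Tcal_*^{(n)}(t+s)\neq\Tcal_*^{(n)}(t)\Tcal_*^{(n)}(s)$. The standard repair is to expand $\Tcal_*^{(n)}(t)=\sum_{j=0}^nS_j(t)$ into its individual Dyson terms and use the convolution identity $S_j(t+s)=\sum_{i=0}^jS_{j-i}(t)S_i(s)$ (obtained by splitting the simplex of integration at time $s$), from which the two inequalities $\Tcal_*^{(n)}(t)\Tcal_*^{(n)}(s)\le\Tcal_*^{(2n)}(t+s)$ and $\Tcal_*^{(n)}(t+s)\le\Tcal_*^{(n)}(t)\Tcal_*^{(n)}(s)$ follow by positivity of each term; only then does the monotone limit give the semigroup property. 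Second, in the uniqueness clause the sentence ``conservativity is exactly the statement that no mass escapes to infinity, and it is what forces every positive solution to satisfy $\Tr\Tcal'_*(t)[\rho]\le\Tr\rho$'' is not an argument. The actual mechanism is on the observable side: minimality gives $\Tcal(t)[Y]\le\Tcal'(t)[Y]$ for all $Y\ge0$; applying this to $Y=\norm{X}\1-X$ and using $\Tcal(t)[\1]=\1$ yields $\Tcal'(t)[X]\le\Tcal(t)[X]+\norm{X}\bigl(\Tcal'(t)[\1]-\1\bigr)$, so the sandwich closes only once one has established $\Tcal'(t)[\1]\le\1$ for the competing solution. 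Extracting that contractivity from the weak equation \eqref{eq:cuadratic-equation} is a genuine step (it is where the identity $2\RE\la u|Au\ra+\sum_k\norm{L_ku}^2=0$ enters a second time), and it is the same gap, seen dually, as your unproved claim about traces.
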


The QDS $\Tcal(t)$ defined in Theorem \ref{teo:QDS-min} is called the \emph{minimal quantum
dynamical semigroup} generated by $A$ and $L_k$, $k=1,\ldots$. Sufficient conditions to assure
Markovianity of a QDS are known \cite{Fagnola99,Fagnola06}. In the application we shall use
the following result.

\begin{theorem}[\!{\cite[Theorem 9.6]{Fagnola06}}] \label{theor_QZ}
Let $A$, $L_k$ be as in Theorem
\ref{teo:QDS-min} and suppose that there exist two positive self-adjoint operators $Q$ and $Z$
in $\mathscr{H}$ with the following properties:
\begin{itemize}
\item $\Dom(A)$ is contained in $\Dom(Q^{\frac{1}{2}})$ and is a core for
    $Q^{\frac{1}{2}}$;
\item the linear manifold $\bigcap_{k\ge1}L_k\big(\Dom(A^2)\big)$ is contained in
    $\Dom(Q^{\frac{1}{2}})$;
\item $\Dom(A)\subset\Dom(Z^{\frac{1}{2}})$  and
\[
 -2\mathrm{Re}\la u|Au\ra=\sum_{k\ge1}\|L_ku\|^2=\|Z^{\frac{1}{2}}u\|^2,\qquad  \forall
u\in\Dom(A);
\]
\item $\Dom(Q)\subset\Dom(Z)$ and for all $u\in\Dom(Q^{\frac{1}{2}})$ we have
    $\|Z^{\frac{1}{2}}u\|\le\|Q^{\frac{1}{2}}u\|$;
\item there is a positive constant $b$ depending only on $A$, $L_k$, $Q$ such that, for
    all $u\in\Dom(A^2)$, the following inequality holds
\[
2\RE\la Q^{\frac{1}{2}}u\big|Q^{\frac{1}{2}}Au\ra
+\sum_{k\geq 1}\|Q^{\frac{1}{2}}L_ku\|^2 \le b\|Q^{\frac{1}{2}}u\|^2.
\]
\end{itemize}
Then, the minimal quantum dynamical semigroup associated to $A$ and $L_k$ is Markov.
\end{theorem}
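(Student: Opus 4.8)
The plan is to reduce conservativity to a fixed-point statement about a completely positive map built from the resolvent of $A$, and then to use $Q$ as a Lyapunov operator to annihilate the defect. Write $R(\lambda)=(\lambda-A)^{-1}$ for $\lambda>0$ and introduce the completely positive map $\mathcal{P}_\lambda[X]=\sum_{k\ge1}R(\lambda)^*L_k^*\,X\,L_kR(\lambda)$ on $\Lscr(\Hscr)$. A short computation with the dissipativity relation $2\RE\la w|Aw\ra+\sum_{k\ge1}\norm{L_kw}^2=0$ applied to $w=R(\lambda)u$ shows $\mathcal{P}_\lambda[\1]\le\1$ for $\lambda\ge1$, so the sequence $\mathcal{P}_\lambda^n[\1]$ is decreasing and converges strongly to a positive fixed point $Y=\mathcal{P}_\lambda[Y]$ with $0\le Y\le\1$. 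By the standard resolvent characterisation of the minimal QDS built in Theorem \ref{teo:QDS-min} (see \cite{Fagnola99}), $\Tcal$ is conservative if and only if $\mathcal{P}_\lambda^n[\1]\downarrow0$ strongly for one (hence every) $\lambda>0$, i.e.\ if and only if $Y=0$. The entire problem therefore becomes: show $Y=0$.

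First I would dominate $Y$ by a bounded operator attached to $Q$. Since $R(\lambda)$ maps $\Hscr$ into $\Dom(A)\subset\Dom(Q^{1/2})$ (the first listed hypothesis), the operator $Q^{1/2}R(\lambda)$ is everywhere defined and closed, hence bounded, so $G:=\big(Q^{1/2}R(\lambda)\big)^*\big(Q^{1/2}R(\lambda)\big)$ is a bounded positive operator. For any $u\in\Hscr$ set $w=R(\lambda)u\in\Dom(A)$; using $Y\le\1$, the identity $\sum_{k\ge1}\norm{L_kw}^2=\norm{Z^{1/2}w}^2$ (third hypothesis) and the domination $\norm{Z^{1/2}w}\le\norm{Q^{1/2}w}$ (fourth hypothesis), I obtain
\[ \la u|Yu\ra=\sum_{k\ge1}\la L_kw|Y\,L_kw\ra\le\sum_{k\ge1}\norm{L_kw}^2=\norm{Z^{1/2}w}^2\le\norm{Q^{1/2}w}^2=\la u|Gu\ra. \]
Hence $0\le Y\le G$ as bounded operators.

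The heart of the argument is the Lyapunov (super-solution) property of $Q$. For $w\in\Dom(A^2)$ and $u=(\lambda-A)w\in\Dom(A)$ I expand $\la u|Qu\ra=\norm{Q^{1/2}(\lambda-A)w}^2$ and insert the fifth hypothesis $2\RE\la Q^{1/2}w|Q^{1/2}Aw\ra+\sum_{k\ge1}\norm{Q^{1/2}L_kw}^2\le b\norm{Q^{1/2}w}^2$; the regularity $L_k\big(\Dom(A^2)\big)\subset\Dom(Q^{1/2})$ (second hypothesis) guarantees every term is finite, and for $\lambda>\max(b,1)$ all remainder terms are nonnegative, giving
\[ \sum_{k\ge1}\norm{Q^{1/2}L_kw}^2\le\la u|Qu\ra-\lambda(\lambda-b)\norm{Q^{1/2}w}^2,\qquad u=(\lambda-A)w. \]
At the level of forms this is exactly $\mathcal{P}_\lambda[Q]\le Q-\lambda(\lambda-b)G$. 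Applying the positive map $\mathcal{P}_\lambda^n$, telescoping the chain $\mathcal{P}_\lambda^{n+1}[Q]\le\mathcal{P}_\lambda^{n}[Q]-\lambda(\lambda-b)\mathcal{P}_\lambda^{n}[G]$ over $n=0,\dots,N-1$, and using $\mathcal{P}_\lambda^{N}[Q]\ge0$, I get
\[ \lambda(\lambda-b)\sum_{n\ge0}\la u|\mathcal{P}_\lambda^{n}[G]u\ra\le\la u|Qu\ra,\qquad u\in\Dom(Q^{1/2}); \]
in particular $\la u|\mathcal{P}_\lambda^{n}[G]u\ra\to0$. Combining with the previous step, $\la u|Yu\ra=\la u|\mathcal{P}_\lambda^{n}[Y]u\ra\le\la u|\mathcal{P}_\lambda^{n}[G]u\ra\to0$ for all $u$ in the dense set $\Dom(Q^{1/2})$, so the bounded positive operator $Y$ vanishes and $\Tcal$ is Markov.

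I expect the main obstacle to be entirely a matter of domain bookkeeping. The Lyapunov inequality is postulated only on $\Dom(A^2)$, whereas the maps $\mathcal{P}_\lambda$ act with ranges in $\Dom(A)$, and $Q$ is unbounded, so the relation $\mathcal{P}_\lambda[Q]\le Q-\lambda(\lambda-b)G$ must be handled as an ordering of (possibly unbounded) positive quadratic forms and then shown to survive iteration by $\mathcal{P}_\lambda^n$ and the telescoping limit. Making this rigorous is where the structural hypotheses do their work: the core property of $\Dom(A)$ for $Q^{1/2}$ extends the form identities to all of $\Dom(Q^{1/2})$, the inclusion $L_k\big(\Dom(A^2)\big)\subset\Dom(Q^{1/2})$ keeps every $Q$-weighted jump term finite, and the self-adjointness and positivity of $Q$ and $Z$ allow the use of the spectral calculus for $Q^{1/2}$ and $Z^{1/2}$ and the monotone passage to the limit (working, if needed, with bounded truncations $Q(1+\epsilon Q)^{-1}$ and letting $\epsilon\downarrow0$). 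Once this bookkeeping is settled, the three steps above close the proof.
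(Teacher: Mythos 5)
The paper does not prove this theorem at all: it is imported verbatim as \cite[Theorem 9.6]{Fagnola06}, so there is no in-paper proof to compare against. What you have written is, in outline, the standard Chebotarev--Fagnola conservativity argument, which is essentially how the cited result is actually proved: reduce conservativity to the vanishing of the fixed point $Y=\lim_n\mathcal{P}_\lambda^{\,n}[\1]$ of the resolvent map $\mathcal{P}_\lambda[X]=\sum_k R(\lambda)^*L_k^*XL_kR(\lambda)$, dominate $Y$ by the bounded operator $G=(Q^{1/2}R(\lambda))^*(Q^{1/2}R(\lambda))$ via the chain $\sum_k\norm{L_kw}^2=\norm{Z^{1/2}w}^2\le\norm{Q^{1/2}w}^2$, and use the Lyapunov inequality to force $\sum_n\la u|\mathcal{P}_\lambda^{\,n}[G]u\ra<\infty$. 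Your three steps are correct, and you have located the hypotheses in the right places. The one point I would press you on is that the ``domain bookkeeping'' in the telescoping step is not merely bookkeeping but the substance of the proof: the form inequality $\mathcal{P}_\lambda[Q]\le Q-\lambda(\lambda-b)G$ is available for $u\in\Dom(A)$ (so that $R(\lambda)u\in\Dom(A^2)$), but already at the second iterate one needs $Q^{1/2}$ applied to vectors of the form $L_{k_2}R(\lambda)L_{k_1}R(\lambda)u$, and the hypothesis $L_k\big(\Dom(A^2)\big)\subset\Dom(Q^{1/2})$ does not by itself put $L_{k_1}R(\lambda)u$ back into $\Dom(A)$; so the inequality cannot be iterated naively as a statement about unbounded forms. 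The remedy you name --- replacing $Q$ by the bounded truncations $Q(1+\epsilon Q)^{-1}$, iterating the resulting bounded-operator inequalities, and passing to the monotone limit using the core property of $\Dom(A)$ for $Q^{1/2}$ --- is exactly the device used in \cite{Fagnola99,Fagnola06}, so the plan closes, but as written that step is a sketch of the hardest part rather than a proof of it.
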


Now, we can go back to the problem of the unitarity of $U(t)$.

\begin{theorem}[\!{\!\cite[Theorems 10.2, 10.3]{Fagnola06}}] \label{teo:V-isometry}
Under Hypothesis \ref{hyp:C} the contractive left cocycle $V$ solving \eqref{eq:lHP} is such
that the family of operators $\til{\Tcal}(t)$ defined by
\[
\la v|\til{\Tcal}(t)[X]u\ra = \la V(t)v \otimes e(0)|(X\otimes \1)V(t)u\otimes e(0)\ra , \quad \forall
u,v\in \Hscr, \quad \forall X\in\Lscr(\Hscr).
\]
is the minimal QDS generated by $K^*$ and $N_{k}^*$, $k=1,\ldots,d$.

Moreover, the following conditions are equivalent:
\begin{description}
\item[(ii)] the process $V$ is an isometry;
\item[(ii)] the minimal QDS associated with $K^*$ and $N^*_{k}$ is conservative;
\end{description}
\end{theorem}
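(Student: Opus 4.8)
The plan is to realise $\til{\Tcal}$ as the minimal QDS of Theorem \ref{teo:QDS-min} for the data $A=K^*$ and $L_k=N_k^*$, $k=1,\dots,d$, and then to read off the equivalence from the action of this semigroup on the identity. First I would check that these data meet the standing assumptions of Theorem \ref{teo:QDS-min}: by Hypothesis \ref{hyp:C}(vi) the operator $K^*$ generates a strongly continuous contraction semigroup; by point (1) of Proposition \ref{prop:moreonF} one has $\Dom(N_k^*)\supset\Dom(K^*)$; and the second identity in \eqref{dissipativity}, extended to all of $\Dom(K^*)$ via point (2) of Proposition \ref{prop:moreonF}, is exactly the relation $2\RE\la u|K^*u\ra+\sum_{k\ge1}\norm{N_k^*u}^2=0$ required there. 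Hence the minimal QDS generated by $K^*$ and $N_k^*$ exists, together with its characterising form \eqref{eq:cuadratic-form}.

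Next I would verify that $\til{\Tcal}$ has the structural properties of a sub-Markov QDS. Complete positivity and normality are inherited from the normal completely positive map $X\mapsto V(t)^*(X\otimes\1)V(t)$, compressed to the vacuum sector through the vectors $u\otimes e(0)$; since $V$ is a contraction one gets $\til{\Tcal}(t)[\1]\le\1$, and the continuity in $t$ follows from the strong continuity of $V$. The semigroup law $\til{\Tcal}(t+s)=\til{\Tcal}(t)\til{\Tcal}(s)$ follows from the left cocycle identity $V(t+s)=V(s)\Theta_s^{\,*}V(t)\Theta_s$ of Definition \ref{defi:cocycle}, using that the shift fixes the vacuum, $\Theta_s(u\otimes e(0))=u\otimes e(0)$, by \eqref{eq:Theta}.

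The core step is to show that $\til{\Tcal}$ solves the integral equation \eqref{eq:cuadratic-equation}. For this I would apply the second fundamental formula of quantum stochastic calculus to $\la V(t)v\otimes e(0)|(X\otimes\1)V(t)u\otimes e(0)\ra$, legitimate because Theorem \ref{teo:SolutionHPequation} places the relevant orbits in the domains required by Definition \ref{defi:L-S}. On the vacuum one has $f=g=0$, so the factors $\overline{g_i(s)}\,f_j(s)$ in that formula collapse to $i=j=0$, leaving only the drift and the It\^o correction; since the left equation \eqref{eq:lHP} carries the coefficients $F_{ji}^*$, with $F_{00}^*=K^*$ and $F_{0k}^*=N_k^*$, the surviving terms reproduce exactly $\la v|\til{\Tcal}(s)[X]K^*u\ra+\la K^*v|\til{\Tcal}(s)[X]u\ra+\sum_{k\ge1}\la N_k^*v|\til{\Tcal}(s)[X]N_k^*u\ra$, that is $\la v|\Lcal[\til{\Tcal}(s)[X]]u\ra$. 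Thus $\til{\Tcal}$ satisfies \eqref{eq:cuadratic-equation}. To identify it with the \emph{minimal} solution rather than merely a solution, I would invoke that $V$ is the minimal contractive cocycle built from the operator $C$ of Hypothesis \ref{hyp:C}(viii) in Theorem \ref{teo:SolutionHPequation}; the minimality of the cocycle transfers to the minimality of $\til{\Tcal}$ as in \cite[Theorem 10.2]{Fagnola06}. I expect this matching of the two minimality notions to be the first delicate point.

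Finally, for the equivalence I would argue as follows. If $V$ is isometric then $V(t)^*V(t)=\1$, whence $\la v|\til{\Tcal}(t)[\1]u\ra=\la V(t)v\otimes e(0)|V(t)u\otimes e(0)\ra$ collapses to $\la v|u\ra$, so the QDS is conservative. Conversely, conservativity gives $\norm{V(t)u\otimes e(0)}=\norm{u}$ for all $u$; to promote this vacuum norm-preservation to full isometry on $\Hscr\otimes\Fscr$ I would again use the second fundamental formula, now for $\norm{V(t)u\otimes e(f)}^2$ with general $f\in\Mscr$, and observe that its integrand is governed by the left unitarity conditions \eqref{unitconds}, whose effective vanishing is equivalent to $\til{\Tcal}(t)[\1]=\1$; alternatively one reduces a general $f$ to the vacuum through the cocycle property and the factorisation \eqref{eq:Fock-fac}. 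The hard part will be this converse direction: transferring a statement that lives on the vacuum (conservativity) to all exponential vectors requires controlling the test-function dependence and justifying the interchange of the infinite sums and limits forced by the unbounded coefficients, which is precisely the content carried by \cite[Theorem 10.3]{Fagnola06}.
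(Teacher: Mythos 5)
The paper does not actually prove this theorem: it is imported verbatim from \cite[Theorems 10.2, 10.3]{Fagnola06}, so there is no internal proof to compare yours against. Judged as a self-contained argument, your sketch correctly reconstructs the skeleton of Fagnola's proof --- the verification that $(K^*,N_k^*)$ satisfy the hypotheses of Theorem \ref{teo:QDS-min}, the sub-Markov and semigroup properties of $\til{\Tcal}$ via the cocycle identity and $\Theta_s\big(u\otimes e(0)\big)=u\otimes e(0)$, and the derivation of Eq.\ \eqref{eq:cuadratic-equation} from the second fundamental formula evaluated on the vacuum --- and you rightly flag the two places where the real work lies. But at both of those places you defer to the very theorem being proved, so they remain genuine gaps. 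For minimality: a $\sigma$-weakly continuous sub-Markov solution of \eqref{eq:cuadratic-equation} is only guaranteed by Theorem \ref{teo:QDS-min} to \emph{dominate} the minimal semigroup on positive elements; equality requires the reverse inequality, which in Fagnola's treatment comes from comparing, term by term, the iterates that construct $V$ with the iterates defining the minimal QDS. Nothing in your sketch supplies that comparison, and ``the minimality of the cocycle transfers'' is a phrase you have not given content to.

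For the converse of the equivalence, the mechanism you propose is, as written, incorrect. The second identity in \eqref{unitconds} holds on $\til D$ unconditionally under Hypothesis \ref{hyp:C}, whether or not the minimal QDS is conservative, so its ``effective vanishing'' cannot be ``equivalent to $\til{\Tcal}(t)[\1]=\1$''. The obstruction to concluding isometry from the second fundamental formula lies elsewhere: the It\^o-correction term contains $V(s)$ applied to vectors of the form $F_{jk}^{\;*}u\otimes e(f)$, and since $V(s)$ is only known to be a contraction there, the formal cancellation provided by \eqref{unitconds} degrades to an inequality. The actual argument in \cite[Theorem 10.3]{Fagnola06} controls the defect $\1-V(t)^*V(t)$ on general exponential vectors by relating it, through a Gronwall-type estimate, to the vacuum defect $\1-\til{\Tcal}(t)[\1]$; conservativity kills the latter and hence the former. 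Your alternative route of reducing a general $f$ to the vacuum ``through the cocycle property and the factorisation \eqref{eq:Fock-fac}'' does not work either: conjugation by $\Theta_s$ does not turn $e(f)$ into $e(0)$, and the substitute that does (Weyl conjugation) changes the generator and reintroduces the same conservativity-transfer problem. The forward implication (isometry implies conservative) is fine and trivial, as you say.
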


\begin{hypo}[Markov condition] \label{hyp:Markov}
The minimal QDS generated by $K^*$ and $N_{k}^*$, $k=1,\ldots,d$, is conservative.
\end{hypo}

Now, both $U(t)$ and $V(t)=U(t)^*$ are isometries and, so, they are unitary operators.

\begin{corollary}\label{cor:unitary}
Under Hypotheses \ref{hyp:C} and \ref{hyp:Markov} the process $U$ introduced in Theorem
\ref{teo:SolutionHPequation} is unitary and it is the unique bounded solution on
$\mathrm{Dom}(C^{1/2})\odot\Escr$ of \eqref{eq:rHP}.
\end{corollary}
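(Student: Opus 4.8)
The plan is to assemble the corollary directly from the two isometry results already in place, using the Markov condition to upgrade a one-sided isometry into the second relation needed for unitarity. The observation driving everything is that $U$ and $V=U^*$ are bounded contractions which are mutually adjoint, so that $U^*U=VV^*$ and $UU^*=V^*V$; hence the unitarity of $U$ is equivalent to requiring that \emph{both} $U$ and $V$ be isometries, and the task reduces to verifying each of these two isometry statements separately.

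First I would invoke Hypothesis \ref{hyp:Markov}, which asserts precisely that the minimal QDS generated by $K^*$ and $N_k^*$ is conservative. By the equivalence established in Theorem \ref{teo:V-isometry}, conservativity of this minimal QDS is equivalent to the process $V$ solving \eqref{eq:lHP} being an isometry; thus $V^*V=\1$, which in terms of $U=V^*$ reads $UU^*=\1$. On the other hand, Proposition \ref{teo:U-isometry} already guarantees, under Hypothesis \ref{hyp:C} alone, that the contractive solution $U$ of \eqref{eq:rHP} introduced in Theorem \ref{teo:SolutionHPequation} is a strongly continuous isometric process, i.e. $U^*U=\1$. Combining the two relations $U^*U=\1$ and $UU^*=\1$ shows that $U(t)$ is unitary for every $t\ge0$.

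It then remains only to read off uniqueness: the second assertion of Proposition \ref{teo:U-isometry} states that whenever $U$ is unitary it is the unique bounded solution of \eqref{eq:rHP} on $\mathrm{Dom}(C^{1/2})\odot\Escr$, which is exactly the final claim. I do not expect a genuine obstacle at this level, since the analytic heavy lifting is carried by the cited results—particularly the QDS dilation identity of Theorem \ref{teo:V-isometry} together with the Markovianity criterion of Theorem \ref{theor_QZ}, and the second fundamental formula used in the proof of Proposition \ref{teo:U-isometry}. The only point requiring care is the bookkeeping: one must track correctly which of the four products $U^*U,\,UU^*,\,V^*V,\,VV^*$ equals the identity, noting in particular that the isometry of $U$ and the isometry of $V$ supply the two \emph{different} relations needed for unitarity rather than the same one twice.
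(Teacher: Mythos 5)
Your proposal is correct and follows the same route as the paper: the paper likewise combines the isometry of $U$ from Proposition \ref{teo:U-isometry} with the isometry of $V=U^*$ obtained from Hypothesis \ref{hyp:Markov} via Theorem \ref{teo:V-isometry}, concludes unitarity, and reads off uniqueness from the second assertion of Proposition \ref{teo:U-isometry}. Your bookkeeping of which products equal the identity is also correct.
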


\begin{remark}\label{Uts}
When $U(t)$ is a strongly continuous unitary right cocycle, one can define the unitary
evolution
\begin{equation}\label{eq:U(t,s)}
U(t,s):=U(t)U(s)^* , \qquad t\geq s\geq 0.
\end{equation}
It is easy to check that it is strongly continuous in $t$ and $s$ and such that
\begin{equation}\label{eq:U-comp}
U(t,s)=\Theta^*_s U(t-s)\Theta_s\,, \qquad
U(t,r)=U(t,s)U(s,r),\qquad 0\leq r\le s\le t\,.
\end{equation}
Moreover, the operator $U(t,s)$ is adapted to $\Hscr\otimes \Fscr_{(s,t)}$ in the sense that
it acts as the identity on $\Fscr_{(0,s)}\otimes \Fscr_{(t}$ and leaves $\Hscr\otimes
\Fscr_{(s,t)}$ invariant. The unitary operator $U(t,s)$ is interpreted as the evolution
operator of system $S_{\Hscr}$ and fields (described in the Fock space $\Fscr$) in the
interaction picture with respect to the free dynamics of the fields \cite{Fri85,BarSpringer}.
\end{remark}

\section{Observables and instruments}
Let us consider now the case in which we are interested in the behaviour of the system
$S_\Hscr$, but any action on it is mediated by some input/output fields, for instance the
electromagnetic field, represented in the Fock space $\Fscr$. In this situation we can measure
only field observables, from which we make inferences on $S_\Hscr$; we can speak of an
indirect measurement. By choosing field observables which commute also at different times in
the Heisenberg picture, we can represent also measurements in continuous time. To this end we
show how to construct such observables and how to eliminate the fields by a partial trace; in
this way we obtain a description of the continuous measurement in terms of quantities
(instruments) related to the system $S_\Hscr$ alone \cite{BarSpringer,BarL85JMP}.

To give the observables at all times, we have to give infinitely many commuting selfadjoint
operators or their joint spectral measure; the easiest way to do this is to work with the
``Fourier transform'' of such a spectral measure, the \emph{characteristic operator}. The
construction below involves the Weyl operators \eqref{def:Weyl}.

\begin{hypo}[Elements of the characteristic operator] \label{H:theBs}
Let $B^1,B^2,\ldots,B^m$ be commuting selfadjoint operators on $\Zscr$ and let us take $c\in
L^1_{\mathrm{loc}}(\Real_+; \Real^m)$, $b \in L^2_{\mathrm{loc}}(\Real_+; \Zscr)$ and
$h^\alpha \in L^2_{\mathrm{loc}}(\Real_+; \Zscr)$, $\alpha = 1,\ldots, m$, such that
\begin{equation}\label{proph}
\IM \langle h^\alpha(t)|h^\beta(t)\rangle = 0\,, \qquad B^\alpha\, h^\beta(t)=0\,,
\qquad \forall t\geq 0, \quad  \forall \alpha, \beta = 1,\ldots, m.
\end{equation}
\end{hypo}

\begin{definition}[The characteristic operator]\label{def:Phi}
For any \emph{test function} $k\equiv (k_1,\ldots$, $k_m) \in L^\infty(\Real_+; \Real^m)$ let
us define $\mathsf{S}_t(k) \in \Uscr\big(L^2(\Real_+;\Zscr)\big)$, $\mathsf{r}_{t}(k)\in
L^2(\Real_+;\Zscr)$ and the \emph{characteristic operator} $\widehat \Phi_t(k)\in
\Uscr(\Fscr)$ by
\begin{gather}\nonumber
\big(\mathsf{S}_t(k) f\big)(s) = 1_{(0,t)}(s) \left[ \mathsf{S}\big(k(s)\big)- \1  \right]
f(s) +f(s)\,, \quad \forall f\in L^2(\Real_+;\Zscr)\,,
\\ \label{subeq:Sr}
\mathsf{S}\big(k(s)\big)= \prod_{\alpha=1}^m \rme^{\rmi k_\alpha(s) B^\alpha}\,, \qquad
\mathsf{r}_{t}(k)(s)= 1_{(0,t)}(s)\, \mathsf{r}(k;s)\,,
\\ \nonumber
\mathsf{r}(k;s)= \rmi \sum_{\alpha=1}^m k_\alpha(s) h^\alpha(s) +
\left[\mathsf{S}\big(k(s)\big) -\1 \right]b(s)\,,
\end{gather}
\begin{multline}\label{def:hatphi}
\widehat \Phi_t(k) = \exp\left\{ \rmi\int_0^t \rmd s \left[ \sum_{\alpha=1}^m k_\alpha(s)
c^\alpha(s) + \IM \left\langle b(s) \big|\mathsf{S}\big(k(s)\big) b(s)\right\rangle\right]
\right\}
\\ {}\times W
\big(\mathsf{r}_{t}(k); \mathsf{S}_t(k)\big).
\end{multline}
\end{definition}

By \eqref{proph} and \eqref{subeq:Sr}, we get $\mathsf{S}(-k)= \mathsf{S}(k)^*$ and
$\mathsf{S}\big(k(t)\big)^* \mathsf{r}(k;t)=-\mathsf{r}(-k;t)$.

\begin{theorem}[\!{\!\cite[Theorem 3.1]{BarSpringer}}] \label{generalobservables}
Under Hypothesis \ref{H:theBs}, the characteristic operator introduced in Definition
\ref{def:Phi} has the following properties:
\begin{enumerate}
\item \emph{localisation properties:} $ \widehat \Phi_t\big( 1_{(t_1,t_2)}k\big) =
    \widehat \Phi_{t_2}\big( 1_{(t_1,t_2)}k\big)\in \Uscr\left(\Fscr_{(t_1,t_2)}\right)$,
    \ $ 0\leq t_1<t_2 \leq t$;

\item \emph{group property:}  $\widehat \Phi_t(0)=\mathds{1}$, \ $ \widehat
    \Phi_t(k)\,\widehat \Phi_t(k^\prime)=\widehat \Phi_t(k+k^\prime)$, \ $\forall
    k,k^\prime \in L^\infty(\Real_+; \Real^m)$;

\item \emph{continuity:} $\widehat \Phi_t(\kappa k)$ is strongly continuous in $\kappa \in
    \Real$ and in $t\geq 0$;
\item \emph{matrix elements:}
\begin{multline}\label{Phimatrixelements}
\langle e(g) | \widehat \Phi_t(k) e(f)\rangle = \langle e(g) |  e(f)\rangle
\\
{} \times \exp \biggl\{ \int_0^t \rmd s \biggl[ - \frac 1 2 \sum_{\alpha\beta} k_\alpha(s)
\langle h^\alpha(s) | h^\beta(s) \rangle k_\beta(s)
\\
{}+\rmi\sum_{\alpha=1}^m k_\alpha(s) \bigl( c^\alpha(s) + \langle h^\alpha(s)|f(s)\rangle
+ \langle g(s)| h^\alpha(s)\rangle \bigr)
\\
{}+ \left\langle g(s) +b(s) \big| \left(\mathsf{S}\big(k(s)\big)-\1  \right) \bigl( f(s) +
b(s) \bigr) \right\rangle \biggr] \biggr\};
\end{multline}
\item $\widehat \Phi_t(k)$ is the unique unitary solution of the QSDE
\begin{subequations}\label{PhiQSDE}
\begin{gather}
\rmd \widehat \Phi_t(k) = \sum_{i,j\geq 0}G_{ij}(t;k)\widehat \Phi_t(k)\, \rmd
\Lambda_{ij}(t), \qquad \widehat \Phi_0(k)=\1 ,
\\
\begin{split}
G_{00}(t;k) =&\langle b(t)| \left(\mathsf{S}\big(k(t)\big)-\1  \right)b(t)\rangle
\\ {}&+\rmi \sum_{\alpha=1}^m k_\alpha(t) c^\alpha(t) - \frac 1 2 \sum_{\alpha,\beta=1}^m
k_\alpha(t) \langle h^\alpha(t) | h^\beta(t) \rangle k_\beta(t),
\end{split}
\\
G_{j0}(t;k)=\langle z_j |\mathsf{r}(k;t)\rangle, \qquad G_{0j}(t;k)=-\langle
\mathsf{r}(k;t) | \mathsf{S}\big(k(t)\big)z_j\rangle,
\\
G_{ij}(t;k)=\left\langle z_i \big| \left(\mathsf{S}\big(k(t)\big)-\1  \right)z_j
\right\rangle .
\end{gather}
\end{subequations}
\end{enumerate}

Moreover, there exist a measurable space $(\Omega, \Dscr)$, a projection valued measure $\xi$
on $(\Omega, \Dscr)$, a family of real valued measurable functions $\big\{ \tilde
X(\alpha,t;\cdot)$, $\alpha=1,\ldots,m$, $t\geq 0\big\}$ on $\Omega$, a family of commuting
and adapted selfadjoint operators $\big\{ X(\alpha,t)$, $\alpha=1,\ldots,m$, $t\geq 0\big\}$
such that $\tilde X(\alpha,0;\omega)=0$, $X(\alpha,0)=0$ and, for any choice of $n$,
$0=t_0<t_1<\cdots < t_n \leq t$, $\kappa_\alpha^l\in \Real$,
\begin{multline}\label{Phiincrements}
\widehat \Phi_t(k) = \exp\biggl\{ \rmi \sum_{l=1}^n \sum_{\alpha=1}^m \kappa_\alpha^l \big[
X(\alpha,t_l) - X(\alpha,t_{l-1})\big]\biggr\}
\\
{}= \int_\Omega \exp\biggl\{ \rmi \sum_{l=1}^n \sum_{\alpha=1}^m \kappa_\alpha^l \big[ \tilde
X(\alpha,t_l;\omega) - \tilde X(\alpha,t_{l-1};\omega)\big]\biggr\} \xi(\rmd \omega)\,,
\end{multline}
where $k_\alpha(s)= \sum_{l=1}^n 1_{(t_{l-1},t_l)}(s) \,\kappa_\alpha^l$.
\end{theorem}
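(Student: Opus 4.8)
The statement to prove is Theorem \ref{generalobservables}, which has two halves: first, properties (1)--(5) of the characteristic operator $\widehat\Phi_t(k)$; second, the spectral representation \eqref{Phiincrements} in terms of a projection-valued measure $\xi$ and selfadjoint increment operators $X(\alpha,t)$. Since this theorem is cited from \cite[Theorem 3.1]{BarSpringer}, my strategy is to verify each property by direct computation from the Weyl-operator definition \eqref{def:hatphi}, using the composition law \eqref{eq:Weyl-composition} and the explicit action \eqref{def:Weyl}, and then to obtain the joint spectral measure by a commutativity-plus-Stone's-theorem argument.

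\emph{Properties (1)--(4).} The plan is to start from \eqref{def:hatphi} and reduce everything to known facts about Weyl operators. For \textbf{(4)}, the matrix elements, I would simply insert the definition of $W(\mathsf{r}_t(k);\mathsf{S}_t(k))$ into $\la e(g)|\widehat\Phi_t(k)e(f)\ra$ and apply \eqref{def:Weyl}: one gets $\la e(g)|e(\mathsf{S}_t(k)f+\mathsf{r}_t(k))\ra$ times the scalar prefactors, and then $\la e(g)|e(h)\ra=\exp\la g|h\ra$ collapses the inner product into the exponential integrand of \eqref{Phimatrixelements}. The only care needed is to recognise that $\mathsf{S}_t(k)$ and $\mathsf{r}_t(k)$ are supported on $(0,t)$, which turns the inner products into integrals $\int_0^t\rmd s$, and to use $B^\alpha h^\beta=0$ and $\IM\la h^\alpha|h^\beta\ra=0$ from \eqref{proph} to simplify the $h$-dependent terms. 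Property \textbf{(2)}, the group property, then follows either from \eqref{eq:Weyl-composition} (checking that the phase $\IM\la\mathsf{r}_t(k)|\mathsf{S}_t(k)\mathsf{r}_t(k')\ra$ combines correctly with the scalar prefactors) or, more cleanly, by observing that \eqref{Phimatrixelements} is multiplicative in $k$ on a dense set of exponential vectors. Property \textbf{(1)}, localisation, is immediate from the indicator $1_{(t_1,t_2)}$ cutting the support of both $\mathsf{S}_t(k)$ and $\mathsf{r}_t(k)$ into $(t_1,t_2)$, together with the Fock factorisation \eqref{eq:Fock-fac}. Property \textbf{(3)}, continuity, follows from strong continuity of Weyl operators in their arguments together with continuity of $\kappa\mapsto\mathsf{S}_t(\kappa k)$ and $\kappa\mapsto\mathsf{r}_t(\kappa k)$.

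\emph{Property (5): the QSDE.} This is the computational heart. The plan is to differentiate the matrix elements \eqref{Phimatrixelements} with respect to $t$ (the integrand is an $L^1_{\mathrm{loc}}$ function of $s$, so this is legitimate for a.e.\ $t$), obtaining $\frac{\rmd}{\rmd t}\la e(g)|\widehat\Phi_t(k)e(f)\ra$ as the bracketed integrand at time $t$ times $\la e(g)|\widehat\Phi_t(k)e(f)\ra$. I would then match this against the second fundamental formula of quantum stochastic calculus applied to the right-hand side of the proposed equation $\rmd\widehat\Phi_t(k)=\sum_{i,j}G_{ij}(t;k)\widehat\Phi_t(k)\rmd\Lambda_{ij}(t)$, whose matrix element generator is $\sum_{i,j\ge0}\overline{g_i(t)}\,G_{ij}(t;k)\,f_j(t)$ with $g_0=f_0=1$. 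Reading off the coefficients of $1$, $\overline{g_j(t)}$, $f_j(t)$, and $\overline{g_i(t)}f_j(t)$ identifies $G_{00},G_{j0},G_{0j},G_{ij}$ and should reproduce exactly the formulas in \eqref{PhiQSDE}; the term $\la g(s)+b(s)|(\mathsf{S}-\1)(f(s)+b(s))\ra$ in \eqref{Phimatrixelements} expands into precisely these four pieces. Uniqueness of the unitary solution follows from the standard uniqueness theorem for HP-type equations with bounded coefficients, since all $G_{ij}(t;k)$ are bounded operators on $\Zscr$.

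\emph{The spectral representation.} For the final claim, the plan is to fix the partition $0=t_0<\cdots<t_n\le t$ and the step function $k_\alpha(s)=\sum_l 1_{(t_{l-1},t_l)}(s)\kappa_\alpha^l$, and to use the group property (2) and localisation (1) to write $\widehat\Phi_t(k)$ as a product of commuting factors over the disjoint intervals. Since the $B^\alpha$ commute and the $\widehat\Phi$ on disjoint intervals act on orthogonal Fock factors, the family $\{\widehat\Phi_t(k):k\ \text{step}\}$ is a commuting family of unitaries, and by the group property $\kappa\mapsto\widehat\Phi_t(\kappa k)$ is a strongly continuous one-parameter unitary group; Stone's theorem then produces commuting selfadjoint generators, which I would assemble into the increment operators $X(\alpha,t_l)-X(\alpha,t_{l-1})$. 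The joint spectral measure of this commuting family yields the projection-valued measure $\xi$ on a suitable $(\Omega,\Dscr)$ and the scalar functions $\tilde X(\alpha,t;\cdot)$ via the spectral theorem, giving \eqref{Phiincrements}. \textbf{The main obstacle} I anticipate is not any single computation but the bookkeeping in property (5): correctly separating the $g$- and $f$-dependence of the integrand in \eqref{Phimatrixelements} into the four distinct noise coefficients, and in particular getting the $G_{0j}$ term right, where the factor $\mathsf{S}(k(t))$ sits asymmetrically and the identity $\mathsf{S}(k(t))^*\mathsf{r}(k;t)=-\mathsf{r}(-k;t)$ must be invoked to match signs.
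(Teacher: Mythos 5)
The paper does not prove this theorem at all: it is imported verbatim from \cite[Theorem 3.1]{BarSpringer}, so there is no internal proof to compare against. Your plan is nevertheless sound and is essentially the standard argument that the cited reference carries out: direct Weyl-operator computation via \eqref{def:Weyl} and \eqref{eq:Weyl-composition} for points (1)--(4), identification of the QSDE coefficients by matching the $t$-derivative of \eqref{Phimatrixelements} against the fundamental formulas (your anticipated sign issue in $G_{0j}$ is resolved exactly as you say, using $\mathsf{S}^*h^\alpha=h^\alpha$ and unitarity of $\mathsf{S}$), and Stone's theorem plus the spectral theorem for a commutative von Neumann algebra on a separable Hilbert space for \eqref{Phiincrements}. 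The only imprecision worth noting is in your uniqueness remark for point (5): the $G_{ij}(t;k)$ are scalars, not operators on $\Zscr$, and $G_{00}(t;k)$ is only locally integrable in $t$ (since $c\in L^1_{\mathrm{loc}}$), so the uniqueness argument is the usual second-fundamental-formula-plus-Gronwall estimate for time-dependent locally integrable scalar coefficients rather than an off-the-shelf bounded-coefficient theorem; this is routine and does not affect the conclusion.
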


From the unitarity and the group property we have $\widehat \Phi_t(k)^*=\widehat
\Phi_t(k)^{-1}=\widehat \Phi_t(-k)$. Equation \eqref{PhiQSDE} is a right HP-equation with
trivial initial space; it can be written also in left form, with the same coefficients. Note
that $\overline{G_{ij}(t;-k)}= G_{ji}(t;k)$.

The observables $X(\alpha,t)$ can be identified by taking $k_\alpha(s)=\kappa$, $k_\beta(s)=0$
for $\beta\neq \alpha$; then, the first equality in \eqref{Phiincrements} gives $\rme^{\rmi
\kappa X(\alpha,t)}=\widehat \Phi_t(k)$ and, by differentiation of the matrix elements
\eqref{Phimatrixelements}, we get
\begin{multline*}
\langle e(g)|X(\alpha,t)e(f)\rangle =\langle e(g)|e(f)\rangle \int_0^t\,\rmd s \big\{
c^\alpha(s) \\ {}+ \langle h^\alpha (s)|f(s)\rangle +\langle g(s)|h^\alpha(s)\rangle + \langle
g(s)+b(s)|B^\alpha[f(s)+b(s)]\rangle\big\}.
\end{multline*}
Let us choose the complete orthonormal system $\{z_i,\, i=1,\ldots ,d\}$ in $\Zscr$ such that
it diagonalises all the operators $B^1,\ldots, B^m$ and such that its first $d^\prime $
components, $0\leq d^\prime\leq d$, span the intersection of the null spaces of these
operators; then, we have $B^\alpha=\sum_{i=d^\prime+1}^d B^\alpha_i |z_i\rangle \langle z_i|$,
$B^\alpha_i\in \Real$, and we can write, on the exponential domain,
\begin{multline}\label{observables}
X(\alpha,t) = \int_0^t c^\alpha(s)\,\rmd s +\sum_{i=1}^{d'}\int_0^t\left(\overline{ h^\alpha_i
(s)} \, \rmd A_i(s)+h^\alpha_i(s)\,\rmd A^\dagger_i(s)\right) \\{}+ \sum_{i=d'+1}^d
B^\alpha_i\int_0^t\left(\rmd \Lambda_{ii}+\overline{ b_i (s)} \, \rmd A_i(s)+b_i(s)\,\rmd
A^\dagger_i(s)+ \abs{b_i(s)}^2\rmd s\right).
\end{multline}
In quantum optical systems the continuous measurement of observables of the type
$\int_0^t\left(\overline{ h^\alpha_i (s)} \, \rmd A_i(s)+h^\alpha_i(s)\,\rmd
A^\dagger_i(s)\right)$ can be obtained by heterodyne/homodyne detection, while terms like
$\int_0^t\left(\rmd \Lambda_{ii}+\overline{ b_i (s)} \, \rmd A_i(s)+b_i(s)\,\rmd
A^\dagger_i(s)+ \abs{b_i(s)}^2\rmd s\right)$ are realised by direct detection, eventually
after interference with a known signal if $b_i\neq 0$\cite{Bar90QO}.

A key point in the whole construction is that even in the Heisenberg picture the observables
$X(\alpha,t)$ continue to be represented by commuting operators and, so, they can be jointly
measured also at different times. Let $U$ be a right unitary cocycle representing the
system-field dynamics and define $\forall T\geq 0$ the ``output'' characteristic operator by $
\widehat \Phi^{\mathrm{out}}_T(k):= U(T)^*\widehat\Phi_T(k)U(T)$. The key property giving the
commutativity of the observables in the Heisenberg picture is $ \widehat
\Phi^{\mathrm{out}}_T\left(1_{(0,t)}k\right)=\widehat \Phi^{\mathrm{out}}_t(k)$, \ $0\leq t
\leq T$. This property follows from the fact that we have $U(T)=U(T,t)U(t)$ (see Remark
\ref{Uts}), $\widehat \Phi_T\left(1_{(0,t)}k\right)=\widehat \Phi_t(k)$ and that $U(T,t)\in
\Uscr\left( \Hscr \otimes \Fscr_{(t,T)}\right)$ commutes with $\widehat \Phi_t(k)\in
\Uscr\left( \Fscr_{(0,t)}\right)$.

Let $\mathfrak{s}\in \Sscr(\Hscr\otimes \Fscr)$ be the initial system-field state. The
\emph{characteristic functional} of the process $\tilde X$ (the ``Fourier transform'' of its
probability law) is given by
\begin{equation}\label{eq:charac-fun}
\Phi_t(k) = \Tr \left\{\widehat\Phi_t(k)U(t)
\mathfrak{s}U(t)^*\right\}= \Tr
\left\{\widehat\Phi^{\mathrm{out}}_t(k) \mathfrak{s}\right\}.
\end{equation}
All the probabilities describing the continuous measurement of the observables $X(\alpha,t)$
are contained in $\Phi_t(k)$; let us give explicitly the construction of the joint
probabilities for a finite number of increments.

The measurable functions $\left\{\tilde X(\alpha,t;\cdot)\,,\; \alpha=1,\ldots,m, \; t\geq
0\right\}$, introduced in Theorem \ref{generalobservables}, represent the output signal of the
continuous measurement. Let us denote by $\Delta \tilde X(t_1,t_2)= \left(\tilde
X(1,t_2)-\tilde X(1,t_1), \ldots ,\tilde X(m,t_2)-\tilde X(m,t_1)\right)$ the vector of the
increments of the output in the time interval $(t_1,t_2)$ and by $\xi(\rmd \boldsymbol{x};
t_1,t_2)$ the joint projection valued measure on $\mathbb{R}^m$ of the increments
$X(\alpha,t_2)- X(\alpha,t_1)$, $\alpha=1,\ldots,m$. Note that, because of the properties of
the characteristic operator, not only the different components of an increment are commuting,
but also increments related to different time intervals; this implies that the projection
valued measures related to different time intervals commute. Moreover, the localisation
properties of the characteristic operator give
\begin{equation}
\xi(A; t_1,t_2)\equiv \xi\big(\Delta \tilde X(t_1,t_2)\in A\big) \in \Lscr(\Fscr_{(t_1,t_2)})\,, \quad
\text{for any Borel set}\ A\subset \mathbb{R}^m.
\end{equation}

As in the last part of Theorem \ref{generalobservables}, let us consider $0=t_0<t_1<\cdots <
t_n \leq t$, $k_\alpha(s)= \sum_{l=1}^n 1_{(t_{l-1},t_l)}(s) \,\kappa_\alpha^l$; then, we can
write
\begin{multline*}
\Phi_t(k) = \Tr\left\{\exp\biggl( \rmi \sum_{l=1}^n \sum_{\alpha=1}^m \kappa_\alpha^l \big[
X(\alpha,t_l) - X(\alpha,t_{l-1})\big]\biggr)U(t) \mathfrak{s}U(t)^*\right\}
\\
{}= \int_{\mathbb{R}^{nm}} \biggl(\prod_{l=1}^n  \rme^{\rmi\sum_{\alpha=1}^m \kappa_{\alpha}^l
x_{\alpha}^l}\biggr)\mathbb{P}_{\mathfrak{s}}\big[\Delta \tilde X(t_0,t_1)\in \rmd x^1
,\ldots, \Delta \tilde X(t_{n-1},t_n)\in \rmd x^n\big],
\end{multline*}
where the physical probabilities are given by
\begin{multline*}
\mathbb{P}_{\mathfrak{s}}\big[\Delta \tilde X(t_0,t_1)\in A_1 ,\ldots, \Delta \tilde
X(t_{n-1},t_n)\in A_n\big]
\\ {}= \Tr \biggl\{\biggl( \prod_{l=1}^n \xi(A_j; t_{l-1},t_l)\biggr) U(t)
\mathfrak{s} U(t)^* \biggr\}.
\end{multline*}
Obviously, \ $\Phi_t(k)$ \ is the characteristic function of the physical probabilities \
$\mathbb{P}_{\mathfrak{s}}\big[\Delta \tilde X(t_0,t_1)\in A_1 ,\ldots, \Delta \tilde
X(t_{n-1},t_n)\in A_n\big]$ and it uniquely determines them.

The aim is now to reformulate the continuous measurement in terms of system $S_\Hscr$ alone,
when the initial state is
\begin{equation}\label{initialstate}
\mathfrak{s}=\rho_0 \otimes |\psi(f)\rangle \langle \psi(f)|, \qquad \rho_0 \in \Sscr(\Hscr), \quad f\in
L^2(\mathbb{R}_+;\Zscr).
\end{equation}

Let $U(t)$ be a unitary, strongly continuous right cocycle and let us define $U(t,s)$ by Eq.\
\eqref{eq:U(t,s)}. Let $\widehat \Phi_t(k)$ be the characteristic operator introduced in
Definition \ref{def:Phi} under Hypothesis \ref{H:theBs} and set
\begin{equation}\label{eq:oc(t,s)}
\widehat \Phi(k;s,t):=\widehat \Phi_t\big(1_{(s,+\infty)}k\big), \qquad 0\leq s \leq t.
\end{equation}
By the definitions \eqref{def:hatphi}, \eqref{eq:oc(t,s)} and the points (1)-(3) of Theorem
\ref{generalobservables} one gets easily $\widehat\Phi(0;s,t)=\1$,
$\widehat\Phi(k;r,t)=\widehat\Phi(k;r,s)\,\widehat\Phi(k;s,t)$ and that $\widehat\Phi(k;s,t)$
is strongly continuous in $s$ and $t$.

\begin{definition}\label{defi:Gfk}
Let us take $f\in\LII$ and $0\le s \le t$. The \emph{reduced characteristic operator} is the
unique operator $\Gcal_f(k;s,t):\Lscr(\Hscr)\to\Lscr(\Hscr)$ that satisfies, $\forall
u,v\in\Hscr$, $\forall X\in\Lscr(\Hscr)$,
\begin{equation}\label{eq:defG}
\big\la v\big|\Gcal_f(k;s,t)[X]u\big\ra= \big\la U(t,s)v\otimes \psi(f)\big|\big(X\otimes \widehat
\Phi(k;s,t)\big)U(t,s)u\otimes \psi(f)\big\ra.
\end{equation}
Then, $\Tcal_f(s,t):=\Gcal_f(0;s,t)$ represents the \emph{reduced evolution operator} for the
observables of $S_\Hscr$.
\end{definition}

\begin{theorem}\label{teo:G-evolution}
\ In the hypotheses above, the family of linear maps \ $\Gcal_f(k;s,t)$, \ $t\ge s\ge0$,
$f\in\LII$, $k\in L^\infty(\Real_+; \Real^d)$, has the following properties:
\begin{enumerate}

\item $\Gcal_f(k;s,s)=\1; \qquad \norm{\Gcal_f(k;s,t)}\leq 1$;
\item $\Gcal_f(k;s,t)$ is \emph{completely positive definite}, i.e., for all integers $n$,
    test functions $k^i$, vectors $\phi_i$ and operators $X_i$, one has
\[
\sum_{i,j=1}^n \big\la \phi_i\big|\Gcal_f(k^i-k^j;s,t)[X_i^*X_j]\phi_j\big\ra\geq 0\,;
\]

\item $\Gcal_f(k;s,t)$ is a $\sigma$-weakly continuous operator on $\Lscr(\Hscr)$ and it
    has a pre-adjoint $\Gcal_f(k;s,t)_*$ acting on the trace class on $\Hscr$;
\item for each $X\in\Lscr(\Hscr)$ the maps $t\mapsto\Gcal_f(k;s,t)[X]$, $s\mapsto
    \Gcal_f(k;s,t)[X]$ and $\kappa\mapsto \Gcal_f(\kappa k;s,t)[X]$ are continuous with
    respect to the $\sigma$-weak topology of $\Lscr(\Hscr)$;

\item $\forall u,v \in \Hscr$, $\forall X\in\Lscr(\Hscr)$,
\begin{equation*}
\big\la
    v\big|\Gcal_f(k;s,t)[X]u\big\ra{}= \big\la U(t,s)v\otimes \psi(f_{(s,t)})\big|\big(X\otimes
    \widehat \Phi(k;s,t)\big)U(t,s)u\otimes \psi(f_{(s,t)})\big\ra;
\end{equation*}
\item if $f(x)=g(x)$ for all $x\in(s,t)$, we get $\Gcal_f(k;s,t)=\Gcal_g(k;s,t)$; then,
    $\Gcal_f(k;s,t)$ is well defined for all $f\in
    \mathrm{L}^2_{\mathrm{loc}}(\mathbb{R};\mathscr{Z})$;
\item $\Gcal_f(k;r,s)\circ \Gcal_f(k;s,t)=\Gcal_f(k;r,t)$, \quad $0\leq r \leq s \leq t$;
\item for all $s,t\geq 0$ we have $\Gcal_f(k;s,s+t)=\Gcal_{f_s}(k_s;0,t)\Big|_{h\to h_s,
    \, b\to b_s , \, c\to c_s}$, where we have introduced the shifted functions
    $f_s(x)=f(x+s)$, $k_s(x)=k(x+s)$, $h_s(x)=h(x+s)$, $b_s(x)=b(x+s)$, $c_s(x)=c(x+s)$.
\end{enumerate}

Moreover, the evolution operator $\Tcal_f(s,t)$, $t\ge s\ge0$, $f\in\LII$, introduced in
Definition \ref{defi:Gfk}, enjoys  the properties:
\begin{description}
\item[(i)] $\Tcal_f(s,t)[\1]=\1$; \qquad $\Tcal_f(s,s)=\1; \qquad \norm{\Tcal_f(s,t)}= 1;$

\item[(ii)] $\Tcal_f(s,t)$ is a $\sigma$-weakly continuous operator on $\Lscr(\Hscr)$ and
    it has a pre-adjoint $\Tcal_f(s,t)_*$ acting on the trace class on $\Hscr$;
\item[(iii)] $\Tcal_f(s,t)$ is completely positive;
\item[(iv)] for each $X\in\Lscr(\Hscr)$ the maps $t\mapsto\Tcal_f(s,t)[X]$ and $s\mapsto
    \Tcal_f(s,t)[X]$ are continuous with respect to the $\sigma$-weak topology of
    $\Lscr(\Hscr)$;

\item[(v)] $\Tcal_f(r,s)\circ \Tcal_f(s,t)=\Tcal_f(r,t)$, \quad $0\leq r \leq s \leq t$;

\item[(vi)] if $f(x)=g(x)$ for all $x\in(s,t)$, we have $\Tcal_f(s,t)=\Tcal_g(s,t)$; then,
    $\Tcal_f(s,t)$ is well defined for all $f\in
    L^2_{\mathrm{loc}}(\mathbb{R};\mathscr{Z})$;
\item[(vii)] for all $s,t\geq 0$ we have $\Tcal_f(s,s+t)=\Tcal_{f_s}(0,t)$, where
    $f_s(x)=f(x+s)$.
\end{description}
\end{theorem}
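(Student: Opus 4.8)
The plan is to notice that every map in the statement shares one \emph{Stinespring-type} presentation, to read off the elementary properties from it, and then to treat the two genuinely dynamical assertions---the composition law and the shift covariance---by exploiting the continuous tensor factorisation of $\Fscr$. For fixed $s,t,f,k$ introduce the isometry $V\colon\Hscr\to\Hscr\otimes\Fscr$, $Vu=U(t,s)(u\otimes\psi(f))$ (it is an isometry since $U(t,s)$ is unitary and $\norm{\psi(f)}=1$), and abbreviate $P=\widehat\Phi(k;s,t)$. Then \eqref{eq:defG} reads $\Gcal_f(k;s,t)[X]=V^*(X\otimes P)V$. As $P$ is unitary this gives at once $\norm{\Gcal_f(k;s,t)[X]}\le\norm X$, the bound in property~(1). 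Normality~(3) holds because $X\mapsto V^*(X\otimes P)V$ is $\sigma$-weakly continuous and has the explicit pre-adjoint $\rho\mapsto\Tr_\Fscr\{(\1\otimes P)V\rho V^*\}$ on $\Tscr(\Hscr)$. For the complete positive definiteness~(2) I would use that, by unitarity and the group property of Theorem~\ref{generalobservables}, $\widehat\Phi(k^i-k^j;s,t)=\widehat\Phi(k^i;s,t)\,\widehat\Phi(k^j;s,t)^*$; setting $w_i=(X_i\otimes\widehat\Phi(k^i;s,t)^*)V\phi_i$ one finds $\sum_{i,j}\la\phi_i|\Gcal_f(k^i-k^j;s,t)[X_i^*X_j]\phi_j\ra=\norm{\sum_i w_i}^2\ge0$. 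For $k=0$ we have $P=\1$, so $\Tcal_f(s,t)[X]=V^*(X\otimes\1)V$ is manifestly completely positive~(iii), $\Tcal_f(s,t)[\1]=\1$ by the isometry property, whence $\norm{\Tcal_f(s,t)}=1$, and~(ii) is~(3) specialised.

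The next block is pure localisation. Property~(5) is the key reduction: using the factorisation \eqref{eq:Fock-fac}, $\psi(f)=\psi(f_{(0,s)})\otimes\psi(f_{(s,t)})\otimes\psi(f_{(t})$, together with the facts (Remark~\ref{Uts} and Theorem~\ref{generalobservables}(1)) that $U(t,s)$ and $\widehat\Phi(k;s,t)$ act as the identity on $\Fscr_{(0,s)}\otimes\Fscr_{(t}$ and leave $\Hscr\otimes\Fscr_{(s,t)}$ invariant, the outer factors pass through unchanged and contribute $\norm{\psi(f_{(0,s)})}^2=\norm{\psi(f_{(t})}^2=1$. Property~(6) is then immediate since the resulting expression depends on $f$ only through $f_{(s,t)}$, and taking $t=s$ makes both operators trivial, giving $\Gcal_f(k;s,s)=\1$ and completing~(1). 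For the continuity~(4) I would observe that on the ball of radius $\norm X$ the $\sigma$-weak and weak operator topologies coincide, so it suffices to check continuity of the matrix elements $\la v|\Gcal_f(k;s,t)[X]u\ra$ in $s$, $t$ and $\kappa$; this follows from the joint strong continuity of $U(t,s)$ (Remark~\ref{Uts}) and of $\widehat\Phi(\kappa k;s,t)$ (Theorem~\ref{generalobservables}(3)) by a routine $\varepsilon/3$ estimate. All the corresponding $\Tcal$ statements (ii),(iv),(vi) are the $k=0$ cases.

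The hard part will be the composition law~(7). Using~(5) I would localise to $\Fscr_{(r,t)}\simeq\Fscr_{(r,s)}\otimes\Fscr_{(s,t)}$ and introduce the isometries $V_{r,s},V_{s,t},V_{r,t}$ as above. Substituting $U(t,r)=U(t,s)U(s,r)$, $\psi(f_{(r,t)})=\psi(f_{(r,s)})\otimes\psi(f_{(s,t)})$ and $\widehat\Phi(k;r,t)=\widehat\Phi(k;r,s)\,\widehat\Phi(k;s,t)$, the crucial structural point is that $\widehat\Phi(k;r,s)$ and $U(t,s)$ act on the commuting tensor slots $\Fscr_{(r,s)}$ and $\Hscr\otimes\Fscr_{(s,t)}$. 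Expanding $V_{r,t}u$ against an orthonormal basis $\{e_n\}$ of $\Hscr$ inserted at the intermediate time $s$ then factorises the inner product \eqref{eq:defG} for $\Gcal_f(k;r,t)$ into a sum of matrix elements of $\Gcal_f(k;s,t)$ weighted by matrix elements of $\Gcal_f(k;r,s)$, which is exactly $\la v|\Gcal_f(k;r,s)[\Gcal_f(k;s,t)[X]]u\ra$. The only delicate bookkeeping is tracking the shared $\Hscr$-factor of the two evolutions and the placement of the three Fock slots, and I expect this to be the main source of friction; property~(v) is the case $k=0$.

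Finally, for the shift covariance I would use $U(s+t,s)=\Theta_s^*U(t)\Theta_s$ from \eqref{eq:U-comp}, together with $\Theta_s\Theta_s^*=\1$ (equivalent to the stated fact that $\Theta_s^*$ is an isometry) and $\Theta_s\psi(f_{(s,s+t)})=\psi((f_s)_{(0,t)})$, which holds because the shift preserves the norm of functions supported in $(s,s+t)$. Substituting into the localised form~(5) of \eqref{eq:defG} and conjugating, the system factor is reproduced via $\Theta_s(X\otimes\1)\Theta_s^*=X\otimes\Theta_s\Theta_s^*=X\otimes\1$, which together with~(5)/(6) identifies $\Tcal_f(s,s+t)=\Tcal_{f_s}(0,t)$, i.e.\ (vii). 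For the full statement~(8) one additionally needs $\Theta_s\widehat\Phi(k;s,s+t)\Theta_s^*=\widehat\Phi_t(k_s)$ with $h,b,c$ replaced by $h_s,b_s,c_s$; this is checked directly from the Weyl-operator form \eqref{def:hatphi} using the covariance of the Weyl operators under the second quantisation of the shift, and is the one remaining explicit computation.
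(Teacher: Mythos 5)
Your proposal is correct and follows essentially the same route as the paper: the norm bound and normality from the Stinespring-type presentation with an explicit pre-adjoint, complete positive definiteness via the group property of $\widehat\Phi$ written as a squared norm, continuity by reduction to weak continuity on bounded sets, the composition law via the factorisation $\Fscr_{(r,t)}\simeq\Fscr_{(r,s)}\otimes\Fscr_{(s,t)}$ together with the cocycle and localisation properties, and the shift covariance by conjugation with $\Theta_s$. The only cosmetic differences are your explicit isometry $V$ and the basis expansion in point (7), where the paper instead manipulates the operator $\Gcal_f(k;s,t)[X]\otimes\widehat\Phi(k;r,s)$ directly; both are sound.
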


\begin{proof}
The first statement of point (1) is immediate from the fact that $\widehat \Phi(k;s,s)=\1$.
The second  statement follows from $\norm{\Gcal_f(k;s,t)[X]}\leq \norm{X\otimes \widehat
\Phi(k;s,t)}=\norm{X}$; the first step is from the definition \eqref{eq:defG}, the unitarity
of $U(t,s)$ and the normalisation of the coherent vector $\psi(f)$, while the second step is
due to the unitarity of the characteristic operator.

By using $\widehat{\Phi}(k^i-k^j;t,s)=\widehat{\Phi}(-k^i;t,s)^*\;\widehat{\Phi}(-k^j;t,s)$
and the definition of $\Gcal_f(k;s,t)$, one gets immediately
\begin{multline*}
\sum_{i,j=1}^n
    \big\la \phi_i\big|\Gcal_f(k^i-k^j;s,t)[X_i^*X_j]\phi_j\big\ra
    \\
{}= \norm{\sum_{j=1}^n X_j \otimes \widehat{\Phi}(-k^j;t,s)\,U(t,s)\,\phi_j\otimes
\psi(f)}^2\geq 0,
\end{multline*}
which is point (2).

Any $\tau \in \Tscr(\Hscr)$ can be written as $\tau=\sum_n|u_n\ra\la v_n|$ for some choice of
the vectors $u_n$, $v_n$ in $\Hscr$. Then, we have
\begin{multline*}
\Tr_{\Hscr}\left\{\Gcal_f(k;s,t)[X]\tau\right\} =\sum_{n} \la v_n|\Gcal_f(k;s,t)[X]u_n\ra
\\
{}=\sum_{n}\big\la U(t,s)\,v_n\otimes\psi(f)\big|\left(X\otimes\widehat
\Phi(k;s,t)\right)U(t,s)\, u_n\otimes\psi(f)\big\ra
\\
{}=\Tr_{\Hscr\otimes\Fscr}\left\{\left(X\otimes\widehat \Phi(k;s,t)\right)
U(t,s)\left(\tau\otimes |\psi(f)\ra \la \psi(f) |\right)U(t,s)^*\right\}
\\
{}=:\Tr_{\Hscr}\left\{X\Gcal_f(k;s,t)_*][\tau]\right\},
\end{multline*}
which defines the pre-adjoint. The existence of the pre-adjoint of $\Gcal_f(k;s,t)$ implies
its $\sigma$-weak continuity \cite[Corollary of Theorem 1.13.2, p.\ 29]{Sakai} and this
completes the proof of point (3)

By point (1) $\Gcal_f(\kappa k;s,t)$ is bounded uniformly in $s$, $t$ and $\kappa$. By
Proposition 1.15.2 in [\citen{Sakai}], the weak and the $\sigma$-weak topologies are
equivalent on the bounded spheres; so, it is enough to prove the weak continuity. Let us set
$\phi_1:= U(t,s)\, v \otimes \psi(f)$, $\phi_2:= U(t,s)\, u \otimes \psi(f)$, $\tilde X:=
X\otimes \widehat \Phi(k;s,t)$. Then, we have
\begin{multline*}
\abs{\la v| \Gcal_f(k;s,t+\epsilon)[X]u\ra-\la v| \Gcal_f(k;s,t)[X]u\ra} \\ {} = \abs{\la
U(t+\epsilon,t)\phi_1|\tilde X \widehat \Phi(k;t,t+\epsilon)U(t+\epsilon,t)\phi_2\ra - \la
\phi_1|\tilde X \phi_2\ra}
\\ {}\leq
\abs{\la \left(U(t+\epsilon,t)-\1\right)\phi_1|\tilde X \widehat
\Phi(k;t,t+\epsilon)U(t+\epsilon,t)\phi_2\ra} \\ {}+ \abs{\la\phi_1|\tilde X \widehat
\Phi(k;t,t+\epsilon)\left(U(t+\epsilon,t)-\1\right)\phi_2\ra} + \abs{\la \phi_1|\tilde X
\left(\widehat \Phi(k;t,t+\epsilon)-\1\right)\phi_2\ra}
\\ {}\leq
\norm{\tilde X \widehat
\Phi(k;t,t+\epsilon)U(t+\epsilon,t)}\norm{\phi_2}\norm{\left(U(t+\epsilon,t)-\1\right)\phi_1}
\\ {}+ \norm{\tilde X \widehat
\Phi(k;t,t+\epsilon)}\norm{\phi_1}\norm{\left(U(t+\epsilon,t)-\1\right)\phi_2}\\ {} +
\norm{\tilde X }\norm{\phi_1}\norm{\left(\widehat\Phi(k;t,t+\epsilon)-\1\right)\phi_2} \leq
\norm{X}\Big\{\norm{u}\norm{\left(U(t+\epsilon,t)-\1\right)\phi_1} \\ {}+
\norm{v}\left[\norm{\left(U(t+\epsilon,t)-\1\right)\phi_2} +
\norm{\left(\widehat\Phi(k;t,t+\epsilon)-\1\right)\phi_2}\right]\Big\},
\end{multline*}
which gives the continuity in $t$. The continuity in $s$ can be proved in a similar way. By
similar steps we get
\begin{multline*}
\abs{\la v| \Gcal_f(\kappa'k;s,t)[X]u\ra-\la v| \Gcal_f(\kappa k;s,t)[X]u\ra}
\\  {}\leq
\norm{X} \norm{v} \norm{\left(\widehat\Phi(\kappa'k;s,t)-\widehat\Phi(\kappa
k;s,t)\right)U(t,s)u\otimes \psi(f)},
\end{multline*}
which gives the continuity in $\kappa$, due to point (3) in Theorem \ref{generalobservables}.
This ends the proof of point (4). Points (5) and (6) are immediate by the localisation
properties.

By using the identification $\psi(f)=\psi(f_{(0,r)})\otimes
\psi(f_{(r,s)})\otimes\psi(f_{(s,t)})\otimes\psi(f_{(t})$ and the localisation properties
    $\widehat\Phi(k;a,b)\in \Uscr(\Fscr_{(a,b)})$, $U(b,a)\in \Uscr(\Hscr\otimes     \Fscr_{(a,b)})$, we have
\begin{multline*}
\la v| \Gcal_f(k;r,s)\circ \Gcal_f(k;s,t)[X]u\ra \\ {}= \left\la U(s,r)\left( v\otimes
\psi(f_{(r,s)}) \right)\big| \left(\Gcal_f(k;s,t)[X]\otimes \widehat \Phi(k;r,s) \right)
U(s,r)\left( u\otimes \psi(f_{(r,s)}) \right)\right\ra
\\ {}= \big\la U(t,s)\left[U(s,r)\left( v\otimes \psi(f_{(r,s)}) \right)\otimes \psi(f_{(s,t)})\right]\big|
\\ \left(\left( X \otimes \widehat
\Phi(k;s,t)\right)\otimes \widehat \Phi(k;r,s) \right) U(t,s)\left[U(s,r)\left( u\otimes
\psi(f_{(r,s)}) \right)\otimes \psi(f_{(s,t)})\right]\big\ra
\\ {}= \left\la U(t,r)\left( v\otimes \psi(f_{(r,t)}) \right)\big| \left(X\otimes \widehat \Phi(k;r,t)
\right) U(t,r)\left( u\otimes \psi(f_{(r,t)}) \right)\right\ra
\\ {}=
\la v| \Gcal_f(k;r,t)[X]u\ra,
\end{multline*}
which gives point (7). Finally, by Eqs.\ \eqref{eq:Theta}, \eqref{eq:U(t,s)},
\eqref{eq:U-comp},
    \eqref{Phimatrixelements} we have
\begin{multline*}
\big\la v\big|\Gcal_f(k;s,s+t)[X]u\big\ra
\\
{}=\big\la U(t)v\otimes\psi(f_s)\big| \big(X\otimes \Theta_s\widehat
\Phi(k;s,s+t)\Theta^*_s\big)U(t)u\otimes\psi(f_s)\big\ra
\\
{}=\big\la U(t,0)v\otimes\psi(f_s)\big|\left(X\otimes \widehat \Phi(k_s;0,t)\Big|_{h\to h_s,\,
b\to b_s,\, c\to c_s} \right)U(t,0)u\otimes\psi(f_s)\big\ra,
\end{multline*}
and point (8) follows.

By the particularising the previous statements to the case $k=0$, we get the properties of the
evolution operator.
\end{proof}

The definition of the reduced characteristic operator has been given in such a way that it is
sufficient to construct the characteristic functional \eqref{eq:charac-fun} when the initial
state is given by Eq.\ \eqref{initialstate}: $\Phi_t(k) = \Tr \left\{\Gcal_f(k;0,t)[\1]\rho_0
\right\}$. So, the reduced characteristic operator determines all the probabilities of the
output. However, the reduced characteristic operator gives something more: the states after
the measurement, conditional on the observed output. This is obtained through the
correspondence with the \emph{instruments} representing the continuous measurement, see
\cite[pp.\ 244--245]{BarSpringer} and [\citen{Bar86LNP}].

\section{The evolution equations}

Up to now, we have only made use of the cocycle properties of $U(t)$, but we are interested in
finding the infinitesimal generator and the evolution equation of the reduced characteristic
operator and for that we need also the QSDE for $U(t)$. The reduced characteristic operator
comes out from the product of three terms: the operators $\widehat \Phi_t(k)$, $U(t)$ and
$U(t)^*$. To compute the differential of this product we have to use two times the second
fundamental formula of quantum stochastic calculus.

Our first step will be to differentiate the unitary process
\begin{equation}\label{eq:Pok}
\Psi_t(k):=\left(\1\otimes\widehat \Phi_t(k)\right)U(t), \qquad k\in L^\infty(\Real_+; \Real^m);
\end{equation}
then, we shall use the second fundamental formula of quantum stochastic calculus to elaborate
the expression giving the reduced characteristic operator.

\begin{lemma}\label{prop:dif-Pok}
Let Hypotheses \ref{hyp:C}, \ref{hyp:Markov}, \ref{H:theBs} hold and the functions $c(t)$,
$b(t)$, $h^\alpha(t)$ be locally bounded in time. Then, $\Psi_t(k)$, defined by
\eqref{eq:Pok}, can be expressed as the quantum stochastic integral on
$\mathrm{Dom}(C^{1/2})\odot\Escr$
\begin{equation}\label{eq:dif-Pok}
\Psi_t(k)=\1+\sum_{i,j\ge0}\int_0^t\big(\1\otimes\widehat \Phi_s(k)\big)
M_{ij}(s;k)U(s)\rmd\Lambda_{ij}(s),
\end{equation}
where
\begin{subequations}\label{Mijexpl}
\begin{multline}
M_{00}(t;k) =K+\sum_{r=1}^d \langle \mathsf{r}(-k;t) | z_r\rangle R_r+\biggl\{\langle b(t)|
\left(\mathsf{S}\big(k(t)\big)-\1  \right)b(t)\rangle
\\ {}+\rmi \sum_{\alpha=1}^m k_\alpha(t) c^\alpha(t) - \frac 1 2 \sum_{\alpha,\beta=1}^m
k_\alpha(t) \langle h^\alpha(t) | h^\beta(t) \rangle k_\beta(t)\biggr\}\1,
\end{multline}
\begin{equation}
M_{0j}(t;k)=N_j +\sum_{r=1}^d \langle \mathsf{r}(-k;t) | z_r\rangle S_{rj}\,, \qquad j\geq
1\,,
\end{equation}
\begin{equation}
M_{i0}(t;k)=\sum_{r\geq 1}\left\langle z_i \big| \mathsf{S}\big(k(t)\big)z_r \right\rangle
R_{r}+\langle z_i |\mathsf{r}(k;t)\rangle \1, \qquad i\geq 1\,,
\end{equation}
\begin{equation}
M_{ij}(t;k)=\sum_{r\geq 1} \left\langle z_i\big|\mathsf{S}\big(k(t)\big)z_r\right\rangle
S_{rj}-\delta_{ij}\1, \qquad i,j\geq 1,
\end{equation}
\end{subequations}
with $\mathrm{Dom}\big(M_{00}(t;k)\big)=\mathrm{Dom}(K)$, $\mathrm{Dom}\big(M_{0j}(t;k)\big)=
\mathrm{Dom}(N_j)\supset \mathrm{Dom}(K)$, $\mathrm{Dom}\big(M_{i0}(t;k)\big)\supset
\bigcap_{k=1}^d\mathrm{Dom}(R_k)\supset \mathrm{Dom}(K)\cup \mathrm{Dom}(K^*)$,
$\mathrm{Dom}\big(M_{ij}(t;k)\big)=\Hscr$, $i,j=1,\ldots, d$.

Moreover, $\forall f,g\in\Mscr$ and $\forall u,v\in \mathrm{Dom}(C^{1/2})$, one has
\begin{multline}\label{eq:dif-UPok}
\langle U(t)v\otimes e(g)|\left(X\otimes \widehat \Phi_t(k)\right)U(t) u\otimes e(f) \rangle
=\langle v|Xu \rangle \langle e(g)|e(f) \rangle
\\ {}+
\sum_{i,j\geq 0}\int_0^t\rmd s \, \overline{g_i(s)}\Bigl\{\big\langle U(s) v \otimes e(g)
\big|\big(X\otimes\widehat \Phi_s(k)\big) M_{ij}(s;k) U(s) u \otimes e(f)\big\rangle
\\ {}+
\langle F_{ji}U(s)v\otimes e(g)|\big(X\otimes\widehat \Phi_s(k)\big)U(s)u\otimes e(f) \rangle
\\ {}+
\sum_{l\geq 1}\big\langle F_{li}U(s)v\otimes e(g) \big|\big(X\otimes\widehat \Phi_s(k)\big)
M_{lj}(s;k) U(s) u \otimes e(f)\big\rangle \Bigr\}f_j(s).
\end{multline}
\end{lemma}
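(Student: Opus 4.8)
The plan is to recognise both factors of $\Psi_t(k)=\bigl(\1\otimes\widehat\Phi_t(k)\bigr)U(t)$ as solutions of right HP-equations and then to differentiate their product. By point (5) of Theorem \ref{generalobservables} the characteristic operator obeys $\rmd\widehat\Phi_t(k)=\sum_{i,j\ge0}G_{ij}(t;k)\widehat\Phi_t(k)\,\rmd\Lambda_{ij}(t)$ with \emph{scalar} coefficients $G_{ij}(t;k)$ and trivial initial space, while $U$ solves \eqref{eq:rHP} with the unbounded coefficients $F_{ij}$ on $\mathrm{Dom}(C^{1/2})\odot\Escr$ (Theorem \ref{teo:SolutionHPequation}). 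I would obtain \eqref{eq:dif-Pok} from the quantum It\^o product formula $\rmd(\widehat\Phi U)=(\rmd\widehat\Phi)U+\widehat\Phi(\rmd U)+(\rmd\widehat\Phi)(\rmd U)$, and then read off \eqref{eq:dif-UPok} from the second fundamental formula of quantum stochastic calculus applied to the pair $(U,\Psi(k))$ with the bounded operator $X\otimes\1$ inserted between the factors.

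Carrying out the product rule, using that the It\^o correction $(\rmd\widehat\Phi)(\rmd U)$ contracts only over indices $\ge1$ (the noise table kills any contraction through the time component), and using that each $G_{ij}(t;k)$ is a scalar commuting with every operator $\cdot\otimes\1$ on $\Hscr$ as well as with $\widehat\Phi_t(k)$, I expect the integrand of $\rmd\Lambda_{ij}$ to be $\bigl(\1\otimes\widehat\Phi_s(k)\bigr)M_{ij}(s;k)U(s)$ with
\[
M_{ij}(s;k)=G_{ij}(s;k)\1+F_{ij}+\sum_{r\ge1}G_{ir}(s;k)F_{rj}.
\]
It then remains to reduce this to the four expressions in \eqref{Mijexpl}. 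For the cases $i,j\ge1$ and $i\ge1,\,j=0$, the scalar $G_{ij}\1$ produced by the direct term cancels the $-G_{ij}\1$ coming from the $-\delta_{rj}\1$ piece of the correction, and the remainder telescopes, e.g.\ $S_{ij}+\sum_{r\ge1}\langle z_i|(\mathsf S-\1)z_r\rangle S_{rj}=\sum_{r\ge1}\langle z_i|\mathsf S\,z_r\rangle S_{rj}$. For the cases $i=0,\,j\ge1$ and $i=j=0$ the correction involves $G_{0r}=-\langle\mathsf r(k;s)|\mathsf S\big(k(s)\big)z_r\rangle$, and here I would invoke the identity $\mathsf S\big(k(s)\big)^*\mathsf r(k;s)=-\mathsf r(-k;s)$ recorded after Definition \ref{def:Phi} to rewrite $-\langle\mathsf r(k;s)|\mathsf S\big(k(s)\big)z_r\rangle=\langle\mathsf r(-k;s)|z_r\rangle$; this is precisely what converts the correction into the terms $\sum_r\langle\mathsf r(-k;s)|z_r\rangle R_r$ and $\sum_r\langle\mathsf r(-k;s)|z_r\rangle S_{rj}$ in $M_{00}$ and $M_{0j}$.

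The domain assertions then follow mechanically, since each $M_{ij}$ differs from $F_{ij}$ (or from $K$, $N_j$, a finite $R_r$-combination) only by bounded operators and scalar multiples of $\1$: Proposition \ref{prop:moreonF}(1)--(2) gives $\bigcap_r\mathrm{Dom}(R_r)\supset\mathrm{Dom}(K)\cup\mathrm{Dom}(K^*)$ and $\mathrm{Dom}(F)=\mathrm{Dom}(K)$, while Hypothesis \ref{hyp:C}(iv) gives $\mathrm{Dom}(N_j)\supset\mathrm{Dom}(K)$. For \eqref{eq:dif-UPok} I would apply the second fundamental formula exactly as in the proof of Proposition \ref{teo:U-isometry}, but now to $Y=U$ (coefficients $F$) and $Z=\Psi(k)$ (coefficients $M$), keeping $X\otimes\1$ between the two factors: the boundary term reproduces $\langle v|Xu\rangle\langle e(g)|e(f)\rangle$, and the three integrand contributions match those displayed in \eqref{eq:dif-UPok} after using $(X\otimes\1)(\1\otimes\widehat\Phi_s)=X\otimes\widehat\Phi_s$ and the commutation of $M_{ij}$ with $\1\otimes\widehat\Phi_s$.

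The genuinely delicate point is not the algebra but the analytic justification of the product rule when $U$ carries unbounded coefficients. I must verify that each integrand $\bigl(\1\otimes\widehat\Phi_s(k)\bigr)M_{ij}(s;k)U(s)$ is a well-defined adapted, stochastically integrable process on $\mathrm{Dom}(C^{1/2})\odot\Escr$: adaptedness and the mapping property follow from those of $U$ (Theorem \ref{teo:SolutionHPequation} together with Definition \ref{defi:adap-proc}) and the boundedness and adaptedness of $\widehat\Phi_s(k)$, while square-integrability in $s$ uses the local boundedness in time of $c$, $b$, $h^\alpha$ assumed in the statement, which makes the scalars $G_{ij}(s;k)$ and hence the bounded parts of $M_{ij}(s;k)$ locally bounded, so that $\int_0^t\|(\1\otimes\widehat\Phi_s)M_{ij}U(s)\eta\|^2\,\rmd s$ is controlled by $\int_0^t\|(F_{ij}\otimes\1)U(s)\eta\|^2\,\rmd s<\infty$, the latter being finite by the stochastic integrability built into Definition \ref{defi:R-S}. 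Ensuring the It\^o correction is assembled on the correct domain, i.e.\ that $U(s)\bigl(\mathrm{Dom}(C^{1/2})\odot\Escr\bigr)$ lies in $\mathrm{Dom}(\overline{M_{ij}\otimes\1})$, reduces through Proposition \ref{prop:moreonF} to the inclusion $\mathrm{Dom}(C^{1/2})\subset\mathrm{Dom}(F)$ already present in Hypothesis \ref{hyp:C}(viii); this is the step I would treat most carefully.
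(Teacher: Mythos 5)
Your proposal is correct and follows essentially the same route as the paper: the second fundamental formula applied to the pair $\widehat\Phi(k)$, $U$ yields the weak product rule with $M_{ij}=F_{ij}+G_{ij}\1+\sum_{r\ge1}G_{ir}F_{rj}$, the identity $\mathsf{S}\big(k(t)\big)^*\mathsf{r}(k;t)=-\mathsf{r}(-k;t)$ reduces this to \eqref{Mijexpl}, local boundedness of $c,b,h^\alpha$ plus stochastic integrability of $F_{ij}U$ upgrades the weak identity to \eqref{eq:dif-Pok} via the first fundamental formula, and a further application of the second fundamental formula to $(X^*\otimes\1)U$ and $\Psi(k)$ gives \eqref{eq:dif-UPok}. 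The domain and integrability verifications you single out as the delicate points are precisely the ones the paper carries out, citing Hypothesis \ref{hyp:C}(iv) and Proposition \ref{prop:moreonF}.
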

Let us recall the convention $f_0(s)=g_0(s)=1$.
\begin{proof} By Eqs.\ \eqref{eq:rHP} and \eqref{PhiQSDE}, the second fundamental formula of
quantum stochastic calculus, $\Phi_t(-k)=\Phi_t(k)^*$ and $\overline{G_{ji}(s;-k)}=
G_{ij}(t;k)$, we get for $f,g\in\Mscr$ and $u,v\in \mathrm{Dom}(C^{1/2})$
\begin{multline}\label{eq:dPU}
\big\la v\otimes e(g)\big|\Psi_t(k)u\otimes e(f)\big\ra - \langle v| u\rangle \langle
e(g)|e(f) \rangle
\\ {}=
\sum_{i,j\geq 0}\int_0^t\rmd s \, \overline{g_i(s)}\big\langle v \otimes \widehat
\Phi_t(-k)e(g) \big| M_{ij}(s;k) U(s) u \otimes e(f)\rangle f_j(s),
\end{multline}
where
\begin{equation}\label{Mij}
M_{ij}(s;k):=F_{ij}+G_{ij}(s;k)\1 +\sum_{r\geq 1}G_{ir}(s;k)F_{rj}\,.
\end{equation}
By inserting the explicit expressions of the elements of the matrices $F$ and $G$ into Eq.\
\eqref{Mij} we get Eqs.\ \eqref{Mijexpl}. The statements about the domains follow from
Hypothesis \ref{hyp:C} point (iv), Proposition \ref{prop:moreonF} point (1) and the fact that
the operators $S_{ij}$ are bounded.

It is easy to check that the processes $\big(\1\otimes\widehat \Phi_s(k)\big) M_{ij}(s;k)U(s)$
are stochastically integrable, by using the fact that $\widehat \Phi_s(k)$ is unitary, the
functions $G_{ij}(s;k)$ are locally bounded, due to the boundedness assumption on
$c,\,b,\,h^\alpha$, and the processes $ F_{ij}U(s)$ are stochastically integrable by
hypothesis. Then, Eq.\ \eqref{eq:dif-Pok} follows from Eq.\ \eqref{eq:dPU} and the first
fundamental formula of quantum stochastic calculus.

By the second fundamental formula of quantum stochastic calculus applied to $(X^*\otimes
\1)U(t)$ and $\Psi_t(k)$ we get immediately Eq.\ \eqref{eq:dif-UPok}.
\end{proof}

For $\lambda,\, \mathsf{r} \in \Zscr$ (with components denoted by $\lambda_j$ and
$\mathsf{r}_j$) let us define the operators
\begin{subequations}
\begin{gather}
B_i(\lambda):= R_i + \sum_{j=1}^d S_{ij}\lambda_j, \qquad i=1,\ldots,d,
\\
K(\lambda,\mathsf{r}):= K- \sum_{i,j=1}^d R_i^{\,*}S_{ij}\lambda_j -
\frac{\norm{\lambda}^2}2\,\1 + \sum_{i=1}^d \overline{\mathsf{r}_i}\,B_i(\lambda).
\end{gather}
\end{subequations}
By taking into account Hypothesis \ref{hyp:C} and Proposition \ref{prop:moreonF} we have
\begin{gather*}
\mathrm{Dom}\big(B_i(\lambda)\big)=\mathrm{Dom}(R_i) \supset \mathrm{Dom}(K)\cup
\mathrm{Dom}(K^*), \\ \mathrm{Dom}\big(K(\lambda,\mathsf{r})\big)=\mathrm{Dom}(K)\supset
\mathrm{Dom}(C^{1/2}).
\end{gather*}

Again by Hypothesis \ref{hyp:C} and Proposition \ref{prop:moreonF}, the domains of the adjoint
of the previous operators contain $\mathrm{Dom}(F^*)\supset \widetilde D$ and on
$\mathrm{Dom}(F^*)$ we have
\begin{subequations}\label{B*K*}
\begin{gather}
B_i(\lambda)^*= R_i^{\;*} + \sum_{j=1}^d \overline{\lambda_j}\, S_{ij}^{\;*}, \qquad
i=1,\ldots,d,
\\
K(\lambda,\mathsf{r})^*= K^*- \sum_{i,j=1}^d \overline{\lambda_j}\, S_{ij}^{\;*} R_i -
\frac{\norm{\lambda}^2}2\,\1 + \sum_{i=1}^d \mathsf{r}_i\,B_i(\lambda)^*.
\end{gather}
\end{subequations}

Finally, for $\kappa,\, c \in \mathbb{R}^m$, $b\in \Zscr$, $h\in \Zscr^m$ we define also
\begin{equation}\label{C()}
C(\kappa, b, c, h):= \langle b|\bigl(\mathsf{S}(\kappa)-\1\bigr)b\ra+\rmi \sum_{\alpha= 1}^m
\kappa_\alpha c^\alpha- \frac 1 2  \sum_{\alpha,\beta= 1}^m \kappa_\alpha\la
h^\alpha|h^\beta\ra \kappa_\beta\,.
\end{equation}

\begin{proposition}\label{prop:step1}
Let Hypotheses \ref{hyp:C}, \ref{hyp:Markov}, \ref{H:theBs} hold and the functions $c(t)$,
$b(t)$, $h^\alpha(t)$ be locally bounded in time. Then, $\forall f\in \Mscr$, $\forall k\in
L^\infty(\Real_+; \Real^m)$, $\forall u,v\in \mathrm{Dom}(C^{1/2})$ we have
\begin{multline}
\langle U(t)v\otimes \psi(f)|\left(X\otimes \widehat \Phi_t(k)\right)U(t) u\otimes \psi(f)
\rangle =\langle v|Xu \rangle
\\ {}+
\int_0^t\rmd s \, \biggl\{\big\langle U(s) v \otimes \psi(f) \big|\big(X\otimes\widehat
\Phi_s(k)\big) K\big(f(s),\mathsf{r}(-k,s)\big) U(s) u \otimes \psi(f)\big\rangle
\\ {}+
\langle K\big(f(s),\mathsf{r}(k,s)\big)U(s)v\otimes \psi(f)|\big(X\otimes\widehat
\Phi_s(k)\big)U(s)u\otimes \psi(f) \rangle
\\ {}+
\sum_{i,j= 1}^d \langle z_i|\mathsf{S}\big(k(s)\big)z_j\ra \big\langle
B_i\big(f(s)\big)U(s)v\otimes\psi(f) \big|\big(X\otimes\widehat \Phi_s(k)\big)
B_j\big(f(s)\big) U(s) u \otimes \psi(f)\big\rangle
\\ {}+
C\big(k(s),b(s),c(s),h(s)\big) \big\langle U(s)v\otimes\psi(f) \big|\big(X\otimes\widehat
\Phi_s(k)\big) U(s) u \otimes \psi(f)\big\rangle \biggr\}.
\end{multline}
\end{proposition}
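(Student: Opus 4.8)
The plan is to specialise the general matrix-element identity \eqref{eq:dif-UPok} of Lemma \ref{prop:dif-Pok} to the case of coherent vectors and then regroup the resulting $(d+1)^2$ terms in the double sum over $i,j\ge 0$ into the four blocks appearing in the statement. First I would take the identity \eqref{eq:dif-UPok}, which holds for exponential vectors $e(f),e(g)$ with $f,g\in\Mscr$, set $g=f$, and divide by $\norm{e(f)}^2=\exp\norm{f}^2$ so as to replace $e(f)$ by the coherent vector $\psi(f)$; this is legitimate because $f\in\Mscr$ and $u,v\in\mathrm{Dom}(C^{1/2})$, and it introduces the factor $\overline{f_i(s)}f_j(s)$ (with the convention $f_0(s)=1$) in front of each term in the integrand. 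The main computation is then purely algebraic: I must show that, after inserting the explicit $M_{ij}(s;k)$ from \eqref{Mijexpl} and summing against $\overline{f_i(s)}\,(\cdot)\,f_j(s)$, the three pieces of the integrand (the one with $M_{ij}$ acting on the right, the one with $F_{ji}$ acting on the left, and the double sum with $F_{li}$ and $M_{lj}$) collapse into the four displayed terms built from $K(\lambda,\mathsf r)$, $B_i(\lambda)$ and $C(\kappa,b,c,h)$.

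The key step is the bookkeeping of the sums, which I would organise by the value of the indices. The block $i,j\ge 1$ of $\sum_{i,j}\overline{f_i}M_{ij}f_j$, together with the contributions from $\sum_r\langle z_i|\mathsf S(k)z_r\rangle S_{rj}$ and the pure-field pieces $\langle z_i|\mathsf r(k)\rangle$, should reconstruct exactly $B_i(f(s))=R_i+\sum_j S_{ij}f_j(s)$ on the right factor and the $\langle z_i|\mathsf S(k(s))z_j\rangle$-weighted quadratic form $\sum_{i,j}\langle z_i|\mathsf S(k)z_j\rangle\langle B_i(f)\cdots|B_j(f)\cdots\rangle$; here one uses that $\sum_j S_{rj}f_j(s)$ is precisely the vector part of $B_r(f(s))$, and that $\mathsf r(k;s)$ expanded in the basis $\{z_r\}$ supplies the scalar shifts. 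The terms linear in the noise, i.e.\ the rows $i=0$ or columns $j=0$, combine $N_j$ and $R_i$ with the appropriate $\langle\mathsf r(\mp k)|z_r\rangle$ coefficients; using Proposition \ref{prop:moreonF}(4), $N_j=-\sum_k R_k^*S_{kj}$, the $R_i^*S_{ij}\lambda_j$ correction inside $K(\lambda,\mathsf r)$ emerges, while the $\overline{\mathsf r_i}B_i(\lambda)$ tail of $K(\lambda,\mathsf r)$ absorbs the remaining creation-type contributions. The $i=j=0$ entry $M_{00}$ contributes both the system part $K-\tfrac12\norm{f(s)}^2\1$ (the $-\norm\lambda^2/2$ coming from $\norm{\psi(f)}$-renormalisation matched against the drift) and the scalar $C(k(s),b(s),c(s),h(s))$.

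The main obstacle I anticipate is the careful separation, on the left-hand factor of each inner product, between $K(f(s),\mathsf r(k,s))^*$-type terms and the genuinely off-diagonal $F_{ji}$ and $F_{li}$ contributions: the identity \eqref{eq:dif-UPok} places $F_{ji}$ and $F_{li}$ on the \emph{bra} side (acting on $U(s)v$), and I must verify that their conjugate-linear recombination, weighted by $\overline{f_i(s)}$, yields precisely $K(f(s),\mathsf r(k,s))$ acting on the left as written (note that the left term carries $\mathsf r(k,s)$ while the right term carries $\mathsf r(-k,s)$, reflecting $\widehat\Phi(-k)=\widehat\Phi(k)^*$ and $\overline{G_{ji}(s;-k)}=G_{ij}(s;k)$). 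This requires using the adjoint relations \eqref{B*K*} together with $N_i^*=-\sum_k S_{ki}^*R_k$ from Proposition \ref{prop:moreonF}(3) and the unitarity $\sum_k S_{ki}^*S_{kj}=\delta_{ij}$ of Hypothesis \ref{hyp:C}(ii) to contract the extra $S$-factors. All manipulations are valid on $\mathrm{Dom}(C^{1/2})\odot\Escr$ by the domain statements in Lemma \ref{prop:dif-Pok}, so no new analytic input is needed; the proposition is essentially a matter of matching coefficients once the algebra is arranged, and I would present it as a direct verification starting from \eqref{eq:dif-UPok}.
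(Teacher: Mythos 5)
Your proposal follows exactly the paper's route: the paper's entire proof of Proposition \ref{prop:step1} is the single sentence that the statement ``follows by direct computations, by inserting the explicit expressions of $F_{ij}$, $M_{ij}(t;k)$, $N_j$ into Eq.\ \eqref{eq:dif-UPok} with $g=f$,'' which is precisely your plan, and your identification of the needed identities (Proposition \ref{prop:moreonF}, the unitarity of $S$, the relations \eqref{B*K*}) is sound. One small caveat: the $-\tfrac12\norm{f(s)}^2$ in $K(\lambda,\mathsf r)$ does not come from the $\norm{e(f)}^{-2}$ renormalisation (which is a constant prefactor and leaves the integrand untouched) but from the $-\delta_{ij}\1$ blocks of $M_{ij}$ and $F_{ij}$ for $i,j\ge 1$ summed against $\overline{f_i(s)}f_j(s)$; this does not affect the validity of your approach.
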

\begin{proof} The statement follows by direct computations,
by inserting the explicit expressions of $F_{ij}$, $M_{ij}(t;k)$, $N_j$ into Eq.\
\eqref{eq:dif-UPok} with $g=f$.
\end{proof}

Let $\Dscr \subset \Lscr(\Hscr)$ be the linear span of the rank-one operators of the type
$|\psi\rangle \langle \phi|$ with $\psi,\,\phi \in \mathrm{Dom}(F^*)$. By using the operators
\eqref{B*K*}, we define, $\forall \psi,\,\phi \in \mathrm{Dom}(F^*)$, $\forall u,\, v \in
\Hscr$,
\begin{multline}\label{defKcal}
\langle v|\Kcal_f^k(t)[|\psi\rangle \langle \phi|] u\rangle =\langle  v |\psi\rangle \langle
K\big(f(t),\mathsf{r}(-k,t)\big)^* \phi|u \rangle+\langle  v
|K\big(f(t),\mathsf{r}(k,t)\big)^*\psi\rangle \langle \phi|u \rangle
\\ {}+
\sum_{i,j= 1}^d \langle z_i|\mathsf{S}\big(k(t)\big)z_j\ra \langle v|
B_i\big(f(t)\big)^*\psi\rangle\langle  B_j\big(f(t)\big)^*\phi| u \rangle
\\ {}+
C\big(k(t),b(t),c(t),h(t)\big) \langle v| \psi\rangle \langle \phi|u\rangle;
\end{multline}
then, by linearity, we extend $\Kcal_f^k(t)$ to $\Dscr$.

\begin{corollary}\label{cor:step2}
Let Hypotheses \ref{hyp:C}, \ref{hyp:Markov}, \ref{H:theBs} hold and the functions $c(t)$,
$b(t)$, $h^\alpha(t)$ be locally bounded in time. Then, $\forall f\in \Mscr$, $\forall k\in
L^\infty(\Real_+; \Real^m)$, $\forall u,v\in \Hscr$, $\forall X\in \Dscr$, we have
\begin{equation}\label{eq:Gfkeqint}
\la v\big|\Gcal_f(k;0,t)[X]u\ra=\la v|Xu\ra +
\int_0^t\big\la v\big|\Gcal_f(k;0,s)\big[\Kcal^{k}_{f}(s)[X]\big]u\big\ra\rmd s\,.
\end{equation}
\end{corollary}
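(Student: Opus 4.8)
The plan is to obtain \eqref{eq:Gfkeqint} directly from Proposition \ref{prop:step1} by rewriting its integrand in terms of $\Gcal_f$ and $\Kcal^k_f$. First I would note that the left-hand side of Proposition \ref{prop:step1} is already $\la v|\Gcal_f(k;0,t)[X]u\ra$: indeed $U(t,0)=U(t)$ and $\widehat\Phi(k;0,t)=\widehat\Phi_t\big(1_{(0,\infty)}k\big)=\widehat\Phi_t(k)$, so Definition \ref{defi:Gfk} identifies the two expressions. It then remains only to show that the four-term integrand of Proposition \ref{prop:step1} equals $\la v|\Gcal_f(k;0,s)\big[\Kcal^k_f(s)[X]\big]u\ra$. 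Since both $\Gcal_f(k;0,s)[\cdot]$ and $\Kcal^k_f(s)[\cdot]$ are linear and $\Dscr$ is spanned by the rank-one operators $|\psi\ra\la\phi|$ with $\psi,\phi\in\Dom(F^*)$, it suffices to verify this for a single such $X=|\psi\ra\la\phi|$.

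The heart of the argument is algebraic. In each of the four terms of Proposition \ref{prop:step1} I would move the coefficient operators $K\big(f(s),\mathsf{r}(\mp k,s)\big)$ and $B_i\big(f(s)\big)$, $B_j\big(f(s)\big)$ off the state vectors and onto $X$ by passing to adjoints. Because $X=|\psi\ra\la\phi|$ projects the $\Hscr$-component onto $\mathrm{span}(\psi)$, one has the identities $(|\psi\ra\la\phi|)A=|\psi\ra\la A^*\phi|$, $A^*(|\psi\ra\la\phi|)=|A^*\psi\ra\la\phi|$ and $B_i^*(|\psi\ra\la\phi|)B_j=|B_i^*\psi\ra\la B_j^*\phi|$. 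Applying these (tensored with $\1$, resp.\ with $\widehat\Phi_s(k)$ on the Fock factor) turns the first term into $\la v|\Gcal_f(k;0,s)\big[|\psi\ra\la K(f(s),\mathsf{r}(-k,s))^*\phi|\big]u\ra$, the second into $\la v|\Gcal_f(k;0,s)\big[|K(f(s),\mathsf{r}(k,s))^*\psi\ra\la\phi|\big]u\ra$, the third into $\sum_{i,j}\la z_i|\mathsf{S}(k(s))z_j\ra\,\la v|\Gcal_f(k;0,s)\big[|B_i(f(s))^*\psi\ra\la B_j(f(s))^*\phi|\big]u\ra$, and the fourth is simply $C(k(s),b(s),c(s),h(s))\,\la v|\Gcal_f(k;0,s)[X]u\ra$. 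Using the linearity of $\Gcal_f(k;0,s)[\cdot]$ to collect the four operators and comparing with the defining formula \eqref{defKcal} of $\Kcal^k_f(s)$, the sum is exactly $\la v|\Gcal_f(k;0,s)\big[\Kcal^k_f(s)[|\psi\ra\la\phi|]\big]u\ra$, as required.

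The step requiring genuine care — and the reason the corollary is stated for $X\in\Dscr$ rather than for arbitrary bounded $X$ — is the legitimacy of these adjoint transfers. One must ensure that the $\Hscr$-components produced lie in the domains of $K(f(s),\mathsf{r}(\pm k,s))^*$ and $B_i(f(s))^*$; this is exactly \eqref{B*K*}, which shows those domains contain $\Dom(F^*)$, to which $\psi$ and $\phi$ belong. On the other side, $U(s)u\otimes\psi(f)$ and $U(s)v\otimes\psi(f)$ already lie in the domains of $K(f(s),\mathsf{r}(\pm k,s))\otimes\1$ and $B_j(f(s))\otimes\1$ for $u,v\in\Dom(C^{1/2})$, which is precisely the regime of Proposition \ref{prop:step1}. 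I expect this domain bookkeeping to be the main obstacle; the computations themselves are routine once the adjoints are seen to be well defined.

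Finally I would extend \eqref{eq:Gfkeqint} from $u,v\in\Dom(C^{1/2})$ to arbitrary $u,v\in\Hscr$. The subspace $\Dom(C^{1/2})$ is dense since $C$ is positive and self-adjoint, and both sides of \eqref{eq:Gfkeqint} are continuous in $u,v$: the left-hand side because $\Gcal_f(k;0,t)[X]$ is bounded by point (1) of Theorem \ref{teo:G-evolution}, and the integral because its integrand is dominated by $\|\Kcal^k_f(s)[X]\|\,\|u\|\,\|v\|$, where $s\mapsto\|\Kcal^k_f(s)[X]\|$ is locally bounded thanks to the local boundedness of $c$, $b$, $h$ and of $f\in\Mscr$. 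Dominated convergence then lets me pass to the limit under the integral, yielding \eqref{eq:Gfkeqint} for all $u,v\in\Hscr$ and completing the proof.
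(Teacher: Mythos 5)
Your proposal is correct and follows essentially the same route as the paper: read off the left-hand side from Definition \ref{defi:Gfk}, identify the integrand of Proposition \ref{prop:step1} with $\la v|\Gcal_f(k;0,s)[\Kcal^k_f(s)[X]]u\ra$ by transferring the coefficient operators onto the rank-one factors of $X\in\Dscr$ (which is what makes \eqref{defKcal} well defined), and then extend from $\Dom(C^{1/2})$ to all of $\Hscr$ using the boundedness of $\Kcal^k_f(s)[X]$ together with $\norm{\Gcal_f(k;0,s)}\le1$. The paper compresses the first step into ``gives immediately''; your domain bookkeeping via $\Dom(F^*)$ and \eqref{B*K*} simply makes that step explicit.
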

\begin{proof}
By using the notations above, Proposition \ref{prop:step1} gives immediately Eq.\
\eqref{eq:Gfkeqint} $\forall u,v\in \mathrm{Dom}(C^{1/2})$. Being $X\in\Dscr$, the operator
$\Kcal^{k}_{f}(s)[X]\big]$ turns out to be bounded; moreover, we have
$\norm{\Gcal_f(k;s,t)}\leq 1$. Then, we can extend \eqref{eq:Gfkeqint} to any $u,\,v\in
\Hscr$.
\end{proof}

By introducing the pre-adjoint of $\Gcal_f(k;0,t)$ and extending \eqref{eq:Gfkeqint} to the
whole trace class we get: $\forall X\in \Dscr$, $\forall \tau \in \Tscr(\Hscr)$,
\begin{equation}\label{foreward}
\Tr_\Hscr\left\{X\Gcal_f(k;0,t)_*[\tau]\right\}=\Tr_\Hscr\left\{X\tau\right\}+
\int_0^t\Tr_\Hscr\left\{\Kcal^{k}_{f}(s)[X]\Gcal_f(k;0,s)_*[\tau]\right\}\rmd s\,,
\end{equation}
with initial condition $\Gcal_f(k;0,0)_*=\1$. For $k=0$ and  $\tau \in \Sscr(\Hscr)$, Eq.\
\eqref{foreward} is a \emph{quantum master equation} and the formal pre-adjoint of
$\Kcal^{0}_{f}(t)$ is known as \emph{Liouville operator}. We can say that \eqref{foreward} is
a generalisation of a quantum master equation, which includes the continuous measurement.

The problem which remains open is to prove the uniqueness of the solution of Eq.\
\eqref{eq:Gfkeqint} or of Eq.\ \eqref{foreward}. We note that in the case of quantum dynamical
semigroups the positivity plays a role in the analogous problem, see Theorem
\ref{teo:QDS-min}, while in the case of Eq.\ \eqref{eq:Gfkeqint} we have only that $\Gcal$ is
positive definite in $k$.

\section{An example: the degenerate parametric oscillator}
The degenerate parametric oscillator is the physical system which was used to produce squeezed
light.\cite{WuK86,GraS87,WuK87} The squeezing of the light was revealed by balanced homodyne
detection, a measurement scheme which is indeed described by continuous measurements of
diffusive type.\cite{Bar90QO,BarGSpringer}. Such a quantum optical system is constituted by an
optical cavity closed by two partially transparent mirrors with inside a crystal with a
$\chi^{(2)}$ non-linearity. Only two cavity modes of the electromagnetic field inside the
cavity are relevant: the subharmonic field of frequency $\omega_C$ (a quantum oscillator with
annihilation and creation operators $a,\, a^\dagger$) and the pump field of double frequency
(with annihilation and creation operators $b,\, b^\dagger$). The pump field is populated by a
resonant laser entering the cavity, the crystal couples the two modes and the light coming out
of the cavity is detected by homodyne devices and/or photocounters. The degenerate parametric
oscillator is well studied from the point of view of theoretical physics and quantum optics in
\cite[Chapts.\ 9, 10, 12, 18]{Carm08}. Here we want to prove that the mathematical model of
the degenerate parametric oscillator with direct and homodyne detection can be rigourously
formulated and gives an example of the theory we have developed.

The formal master equation is given in \cite[Eq.\ (9.97)]{Carm08} and reads
\begin{multline}\label{mastereqC}
\dot \rho(t) =-\rmi [H_0,\rho(t)]-\rmi \left[\lambda\rme^{-2\rmi \omega_C t}b^\dagger+
\overline{\lambda}\rme^{2\rmi \omega_C t} b ,\,\rho(t)\right]
\\
{}+\kappa\left(\overline{n}+1\right)\left(2a\rho(t) a^\dagger-a^\dagger a \rho(t)-
\rho(t)a^\dagger a \right)
\\{}+
\kappa\overline{n}\left(2a^\dagger\rho(t) a-a a^\dagger \rho(t)- \rho(t)a a^\dagger \right)+
\kappa_p\overline{n}_p\left(2b^\dagger\rho(t) b-b b^\dagger \rho(t)- \rho(t)b b^\dagger
\right)
\\
{}+\kappa_p\left(\overline{n}_p+1\right)\left(2b\rho(t) b^\dagger-b^\dagger b \rho(t)-
\rho(t)b^\dagger b \right).
\end{multline}
The Hamiltonian term $H_0$ contains the free energies of the modes and the interaction due to
the $\chi^{(2)}$ non-linearity:
\begin{equation}
H_0=\omega_C a^\dagger a +2\omega_C b^\dagger b+ \frac{\rmi g}2\left(a^{\dagger 2}b- b^\dagger
a^2\right).
\end{equation}
For the constants we have $ \omega_C>0$, \ $g\in\Real$, \ $g\neq 0$, \ $\kappa>0$, \
$\overline{n}\geq 0$, \ $\kappa_p>0$, \ $\overline{n}_p\geq 0$.

This model, plus detection, can be rigourously formulated in the set up developed before.
First, the Hilbert space is identified with the span of the eigenvectors of the two number
operators and the creation and annihilation operators are defined. Let us take the Hilbert
space $\Hscr=\ell^2(\mathbb{N})\otimes \ell^2(\mathbb{N})$ with canonical orthonormal basis
$\{e_{n,m}, \;n,m\geq 0\}$.  The creation, annihilation and number operators for the
subharmonic mode are defined by
\begin{subequations}
\begin{gather}\label{Dom_a}
\mathrm{Dom}(a) = \mathrm{Dom}\left(a^\dagger\right) = \left\{u\in\Hscr :
\sum_{n,m\geq 0}n \abs{u_{n,m}}^2<+\infty\right\},
\\
a^\dagger\,e_{n,m}=\sqrt{n+1}\,e_{n+1,m}, \qquad a\,e_{0,m}=0, \quad
a\,e_{n,m}=\sqrt{n}\,e_{n-1,m}, \ \text{if } n>0,
\\
\mathrm{Dom}\left(a^\dagger a\right) = \left\{u\in\Hscr : \sum_{n,m\geq 0}n^2
\abs{u_{n,m}}^2<+\infty\right\}, \qquad a^\dagger a\,e_{n,m}=n\,e_{n,m}.
\end{gather}
\end{subequations}
An analogous definition holds for the operators $b^\dagger,\, b,\, b^\dagger b$, which act on
the second factor of the tensor product.

In constructing the model we have to reproduce the effects contained in the master equation
\eqref{mastereqC} and to introduce the measurement. So, we have to introduce losses at the
mirrors and thermal dissipation in the crystal and at the walls of the cavity. We have also to
introduce the possibility of injecting laser light feeding the pump mode. Moreover, we
consider the direct detection of photons with two photocounters, chosen one to be sensible to
photons around frequency $\omega_C$ and the other to photons around frequency $2\omega_C$.
Finally, we consider homodyning around frequency $\omega_C$. To realise all these features in
the mathematical model we need many channels, but some channels with similar structure can be
collected together and the minimal number is $d=8$. We use channels 1 and 2 to describe the
light reaching the two photocounters and channel 3 for the light reaching the homodyne
detector, channel 4 is the one used for the injection of the laser, channels 5--8 describe
losses and thermal dissipation. There is no scattering effect which mixes the channels. The
channel operators and the unitary matrix of system operators we need are
\begin{subequations}
\begin{gather}
R_1 =\beta_{1}\,b, \qquad R_2 =\alpha_{1}\,a, \qquad R_3 =\alpha_{2}\,a, \qquad R_4
=\beta_{2}\,b, \qquad R_5 =\beta_3\,b,
\\
R_6 =\alpha_{3}\,a, \qquad R_7 = \beta_4\,b^\dagger,\qquad R_8 =\alpha_4\,a^\dagger, \qquad
S_{ij}=\delta_{ij}\1,
\\
\abs{\alpha_{1}}^2 +\abs{\alpha_{2}}^2 +\abs{\alpha_{3}}^2=
2\kappa\left(\overline{n}+1\right), \qquad \abs{\alpha_{4}}^2= 2\kappa\overline{n},
\\
\abs{\beta_{1}}^2 +\abs{\beta_{2}}^2 +\abs{\beta_{3}}^2=
2\kappa_p\left(\overline{n}_p+1\right), \qquad \abs{\beta_{4}}^2= 2\kappa_p\overline{n}_p.
\end{gather}
\end{subequations}

The operator $K$ has to include the Hamiltonian $H_0$ and to satisfy Eq.\
\eqref{dissipativity}; so, it must have the formal expression
\begin{multline*}
K=-\rmi H_0-\frac 1 2 \sum_{i=1}^8 {R_i}^*R_i =\frac{g}2\left(a^{\dagger 2}b- b^\dagger
a^2\right)-\left(\kappa \overline{n} +\kappa_p\overline{n}_p\right)\1
\\{}-\left(\rmi \omega_C+\kappa \left(2 \overline{n} + 1 \right)\right)
a^\dagger a - \left(2\rmi \omega_C+\kappa_p \left( 2\overline{n}_p + 1 \right)\right)
b^\dagger b .
\end{multline*}
Rigourously, by defining $u_{n,m}=0$ if $n<0$ and/or $m<0$, we have
\begin{subequations}\label{Def_K}
\begin{equation}
Ku = \sum_{n,m}k_u(n,m)\,e_{n,m}\,, \qquad \mr{Dom}(K)=\left\{u:\sum_{n,m}\abs{k_u(n,m)}^2<+\infty\right\},
\end{equation}
\begin{multline}
k_u(n,m) := \frac g2\,\sqrt{n(n-1)(m+1)}\, u_{n-2,m+1} - \frac g2\,\sqrt{m(n+1)(n+2)}\,
u_{n+2,m-1}
\\ {}-
\left[\kappa \overline{n}+ \kappa_p\overline{n}_p +\rmi \omega_Cn +\kappa \left(2
\overline{n}+ 1  \right) n +2\rmi \omega_Cm +\kappa_p \left(2 \overline{n}_p+1  \right)
m\right]u_{n,m}\,.
\end{multline}
\end{subequations}

\begin{theorem}\label{theo:hypF}
Let us construct the $F$-matrix by setting $F_{00}=K$, $F_{i0}=R_i$, $F_{0j}=
N_j=:-R_j^{\,*}$, $F_{ij}=0$, $i,j\geq 1$. Then, Hypothesis \ref{hyp:C} hold true with $D=\til
D$ given by the linear span of the basis $\{e_{n,m},\; n,m\geq 0\}$ and with $C=N^4$, where $
N:=a^\dagger a +2b^\dagger b$.
\end{theorem}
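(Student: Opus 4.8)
The plan is to verify the eight points of Hypothesis \ref{hyp:C} one at a time for the explicit operators of the model, exploiting the one structural fact that drives everything: the operator $N=a^\dagger a+2b^\dagger b$ is conserved by the dynamics. Indeed the $\chi^{(2)}$ coupling $a^{\dagger2}b-b^\dagger a^2$ and the free terms $a^\dagger a$, $b^\dagger b$ all commute with $N$, and each $R_i^{\,*}R_i$ is a function of $a^\dagger a$ or of $b^\dagger b$; hence $K=-\rmi H_0-\frac12\sum_iR_i^{\,*}R_i$ leaves every eigenspace $\{N=\nu\}$ invariant. Since the solutions of $n+2m=\nu$ are finite in number, each such eigenspace is finite-dimensional, so $K$ decomposes as an orthogonal sum of finite matrices $K=\bigoplus_\nu K_\nu$, while $C=N^4$ and $C_\epsilon=N^4/(1+\epsilon N^4)^2$ act as the scalar $h(\nu):=\nu^4/(1+\epsilon\nu^4)^2$ on each block; the $R_i$ shift $\nu$ by the fixed amount $\delta_i\in\{1,2\}$. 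I would record this block decomposition first, as it trivialises several of the points.

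The routine points come next. Point (ii) is immediate, since $S_{ij}=\delta_{ij}\1$ makes $F_{ij}=0$ and the unitarity relations trivial; points (i), (iii)--(v) and (vii) follow from the explicit forms, the identity $N_j=-R_j^{\,*}$ built into the construction, and the fact that the finite span $D=\til D$ is a core for the number-type operators $K,R_i,N_i$ and their adjoints. For point (vi) the dissipativity relations \eqref{dissipativity} are purely algebraic: from $K=-\rmi H_0-\frac12\sum_iR_i^{\,*}R_i$ with $H_0=H_0^*$ one gets $K+K^*=-\sum_iR_i^{\,*}R_i$ block by block, whence both relations in \eqref{dissipativity} follow (the second using $N_k^*=-R_k$, so $\|N_k^*v\|=\|R_kv\|$). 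Generation of contraction semigroups is where the block structure pays off: each finite matrix $K_\nu$ satisfies $K_\nu+K_\nu^*\le0$, hence generates a contraction on the block, and $\bigoplus_\nu\rme^{tK_\nu}$ is a strongly continuous contraction semigroup whose generator is $K$; the same argument handles $K^*$.

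The substantive work is point (viii), which I expect to be the main obstacle. First I would check $\Dom(C^{1/2})=\Dom(N^2)\subset\Dom(K)$: on the shell $\{N=\nu\}$ the coupling term of $K$ grows like $\nu^{3/2}$ (from $\sqrt{n(n-1)(m+1)}$), so $N^2$ dominates it and the inclusion holds; this is precisely why $C$ must be at least of order $N^3$ and motivates the choice $C=N^4$. For (a) I would take $D_\epsilon=\til D$: the diagonal operator $C_\epsilon^{1/2}=N^2/(1+\epsilon N^4)$ preserves the finite span, and on each shell $\norm{F_{ij}^*C_\epsilon^{1/2}|_\nu}\lesssim\nu^{3/2}\,\nu^2/(1+\epsilon\nu^4)$, which is bounded uniformly in $\nu$, so each $F_{ij}^*C_\epsilon^{1/2}$ is bounded.

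For the key inequality (b) I would expand the left-hand side using $F_{00}=K$, $F_{i0}=R_i$, $F_{0j}=-R_j^{\,*}$, $F_{ij}=0$. Because $C_\epsilon$ is a function of $N$ it commutes with $K$ and with each $R_k^{\,*}R_k$, so the diagonal contribution collapses, via $K+K^*=-\sum_kR_k^{\,*}R_k$, to the commutator sum $\sum_k\la R_ku_0|[C_\epsilon,R_k]u_0\ra$, while the terms containing $u_i$, $i\ge1$, reduce to $2\RE\sum_{i\ge1}\la u_i|[C_\epsilon,R_i]u_0\ra$. The whole inequality thus reduces to controlling $[C_\epsilon,R_k]$, which on the shell $\{N=\nu\}$ is multiplication by the finite difference $h(\nu)-h(\nu-\delta_k)$ times $R_k$. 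The estimates I must establish, uniformly in $\epsilon$ and $\nu$ (the factor $\nu$ coming from $R_k^{\,*}R_k\lesssim N$), are of the form $\nu\,\abs{h(\nu)-h(\nu-1)}\le b_1h(\nu)+b_2$ for the diagonal part and $\nu\,\abs{h(\nu)-h(\nu-1)}^2/h(\nu-1)\le b_1h(\nu)+b_2$ for the off-diagonal part after a weighted Cauchy--Schwarz; I would prove them by the mean value theorem, splitting into the regimes $\epsilon\nu^4\le1$, where $\abs{h'(x)}\le 4x^3$ and $h(\nu)\gtrsim\nu^4$, and $\epsilon\nu^4>1$, where $\abs{h'(x)}\sim4/(\epsilon^2x^5)$ and $h(\nu)\sim1/(\epsilon^2\nu^4)$. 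The delicate point, and the crux of the theorem, is the uniformity in $\epsilon$: the finite differences of $h$ must be dominated by $h$ itself up to the number-operator growth, and this is exactly what the choice $C=N^4$ secures.
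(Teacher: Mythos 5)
Your proposal is correct and, on the crux of the theorem --- the inequality in point (viii)(b) --- it follows essentially the same route as the paper: reduce the left-hand side, using $F_{ij}=0$ ($i,j\geq1$), the commutation of $C_\epsilon$ with $K$ and the dissipativity identity, to $2\RE\sum_{i\geq1}\la u_i+\frac12R_iu_0|[C_\epsilon,R_i]u_0\ra$, observe that $[C_\epsilon,R_i]$ acts on the shell $\{N=\nu\}$ as a finite difference of $f(x)=x^4/(1+\epsilon x^4)^2$ times $R_i$, and close with bounds of the type $\abs{1-f(x\pm j)/f(x)}\leq c_j/x$ uniform in $\epsilon$; the paper's explicit constants $15/x$, $80/x$, $64/x$, $648/x$ are exactly your mean-value-theorem estimates in the two regimes $\epsilon x^4\lessgtr1$. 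Where you genuinely diverge is in the organisation and in point (vi): you make the conservation of $N$ and the resulting finite-dimensional block decomposition $K=\bigoplus_\nu K_\nu$ the structural backbone (the paper only hints at it, crediting Bottero's thesis for the idea of using $N$), and you obtain the contraction semigroups as $\bigoplus_\nu\rme^{tK_\nu}$, whereas the paper proves $K^{**}=K$ and invokes the Lumer--Phillips theorem from Pazy. Your route buys a cleaner core argument (truncation to finitely many shells commutes with $K$) at the price of one step you should not leave implicit: identifying the generator of the direct-sum semigroup with $K$ on the maximal domain of \eqref{Def_K} (standard, via $\norm{(\rme^{tK_\nu}-\1)u_\nu}\leq t\norm{K_\nu u_\nu}$ and Fatou, but it is the whole content of the claim). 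Two further small loose ends: the requirement $\Dom(N_i)\supset\Dom(K)$ in point (iv) is not read off the explicit formulas but needs the dissipativity estimate $\norm{R_ku}^4\leq4\norm{u}^2\norm{Ku}^2$ (which your block picture also yields, by truncation); and your weighted Cauchy--Schwarz with weight $h(\nu-\delta)$ degenerates at the few shells where $h(\nu-\delta)=0$, which must be absorbed into the $b_2\norm{u_i}^2$ term --- the analogue of the paper's remark that its inequalities ``can be slightly modified to include also the cases $x=0,1,2$''.
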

\begin{proof}
By applying the definition of adjoint and Riesz lemma \cite{Pazy83} one can easily check that
$a^*=a^\dagger$, $a^{\dagger\,*}=a$ and the same for $b$, $b^\dagger$, as it is well known. In
particular all operators $R_i$, $N_i$ are closed \cite{Fagnola02}. By \eqref{Dom_a} we have
the domain
\begin{equation}\label{Dom_ab}
D_{RN}:=\bigcap_{\begin{smallmatrix}{i,\;j}\\ { (i,j)\neq (0,0)}\end{smallmatrix}}
\mathrm{Dom}\left(F_{ij}\right)
= \left\{u\in\Hscr :\sum_{n,m\geq 0}\left(n+m\right) \abs{u_{n,m}}^2<+\infty\right\}.
\end{equation}

Again by the definition of adjoint, \cite{Pazy83} we get $K^*$, which turns out to be defined
by Eqs.\ \eqref{Def_K} with the substitutions $\omega_C\to -\omega_C$, $g\to -g$.
From the definitions of $K$ and $K^*$ we get, by direct computations, the dissipativity
conditions \eqref{dissipativity} and $K^{**}=K$. In particular also $K$ is closed.

For every $u\in D$ we get, from the dissipativity condition
\[
\norm{Ku}^2\geq \Big\langle Ku\Big| \frac {u}{\norm{u}}\Big\rangle \Big\langle \frac
{u}{\norm{u}} \Big| Ku\Big\rangle = \frac{\abs{\langle Ku| u\rangle}^2}{\norm{u}^2} \geq
\frac{\left(\RE\langle Ku| u\rangle\right)^2}{\norm{u}^2} =
\frac{\left(\sum_k\norm{R_ku}^2\right)^2}{4\norm{u}^2},
\]
which gives $\norm{R_k u}^4 \leq 4 \norm{u}^2 \norm{Ku}^2$ and $D_{RN}\supset
\mathrm{Dom}(K)$. Analogously, we get $D_{RN}\supset \mathrm{Dom}(K^*)$. Up to now we have
proved conditions (i), (ii), (iv), (v), (vii).

To prove condition (iii) we have to show that the set $D$ given in the Proposition is a core
for $K$, $K^*$, $a$, $a^\dagger$, $b$, $b^\dagger$. By $\langle u |a e_{n,m}\rangle =
\sqrt{n}\; \overline {u_{n-1,m}}$, we get
\[
\left\{ u\in \Hscr: {}\ \exists \phi \in \Hscr : \langle u| a e_{n,m}\rangle = \langle \phi |
e_{n,m}\rangle \ \forall n,m\right\}= \mathrm{Dom}(a^\dagger).
\]
Similar considerations hold for $a^\dagger$, $b$, $b^\dagger$ and $D$ is a core for $a$,
$a^\dagger$, $b$, $b^\dagger$. Analogously, by writing the expressions of $Ke_{n,m}$ e
$K^*e_{n,m}$ we deduce that $D$ is a core for $K$ and $K^*$.

By \eqref{dissipativity} both $K$ e $K^*$ are \emph{dissipative} \cite[Definition 4.1 p.\
13]{Pazy83} and, by \cite[Corollary 4.4 p.\ 15]{Pazy83}, they are generators of contraction
semigroups in $\Hscr$. This complete the proof of point (vi).

Finally, let us consider point (viii).

We have $\Dom\big(C^{1/2}\big)=\left\{u\in\Hscr : \sum_{n,m\geq 0}(n+2m)^4
\abs{u_{n,m}}^2<+\infty\right\}$, which is obviously contained in $\Dom(F)=\Dom(K)$ given in
\eqref{Def_K}.

For any $\epsilon>0$ take $D_\epsilon=D=\widetilde D$. Then, $C_\epsilon^{1/2}
{D}_\epsilon\subset\til{D}$ because $C_\epsilon$ is diagonal in the canonical basis. For large
$n$ and/or $m$, $C_\epsilon\, e_{n,m}$ goes as $1/\big(\epsilon(n+2m)^2\big)$ and each
operator $F_{ij}^*C_\epsilon^{1/2}|_{{D}_{\epsilon}}$ is bounded (the worst case is for
$F_{00}^*=K^*$). This is point (a).

By explicitly computing the left hand side of the inequality in point (b), we see that we have
to prove the inequality
\[
2\RE \sum_{i\geq 1}\Bigl\langle u_i+\frac 1 2 \, R_iu_0 \Big|[C_\epsilon,R_i]u_0\Bigr\rangle
\leq  \sum_{i\geq 0}\left(b_1
\la u_i| C_\epsilon u_i\ra+ b_2 \norm{u_i}^2\right).
\]
The proof of this inequality is long and we give only a sketch.

Let us note that for any function of the operator $N$ one has $ af(N)=f(N+1)a$, \
$a^{\dag}f(N+1)=f(N)a^{\dag}$, \ $bf(N)=f(N+2)b$, \ $b^{\dag}f(N+2)=f(N)b^{\dag}$. \ We take
$f(x)= \frac {x^4}{\left(1+\epsilon x^4\right)^2}$ for $x\geq 0$, $f(x)=0$ for $x<0$, so that
$f(N)=C_\epsilon$.

By these relations and standard estimates one can obtain
\[
\RE \langle \alpha a u |[C_\epsilon, \alpha a]u\rangle \leq \abs{\alpha}^2 \langle a^\dagger a
u| \abs{f(N-1)-f(N)} u\rangle.
\]
By the relations above, expansion in the canonical basis and standard estimates we can prove
also
\[
2\RE \langle v |[C_\epsilon, \alpha a]u\rangle \leq \abs{\alpha}^2 \langle a^\dagger a
u| \abs{f(N-1)-f(N)} u\rangle + \langle v | \abs{f(N)-f(N+1)} v\rangle .
\]
Analogous estimates can be obtained in the cases involving $a^\dagger$, $b$, $b^\dagger$. All
together these results give
\begin{multline*}
2\RE \sum_{i\geq 1}\Bigl\langle u_i+\frac 1 2 \, R_iu_0 \Big|[C_\epsilon,R_i]u_0\Bigr\rangle
\leq   \sum_{i=1,4,5} \big\langle u_i\big|\abs{f(N+2)-f(N)}u_i\big\rangle
\\ {}+4\kappa\left(\overline n +1\right)\big\langle a^\dagger a
u_0\big|\abs{f(N-1)-f(N)}u_0\big\rangle+\big\langle a^\dagger a
u_8\big|\abs{f(N-1)-f(N)}u_8\big\rangle
\\ {}+
4\kappa\overline n \big\langle \left(a^\dagger a+1\right)
u_0\big|\abs{f(N+1)-f(N)}u_0\big\rangle + \sum_{i=2,3,6} \big\langle
u_i\big|\abs{f(N+1)-f(N)}u_i\big\rangle
\\
{}+ 4\kappa_p\left(\overline{n}_p +1\right)\big\langle b^\dagger b
u_0\big|\abs{f(N-2)-f(N)}u_0\big\rangle+\big\langle b^\dagger b
u_7\big|\abs{f(N-2)-f(N)}u_7\big\rangle
\\ {}+
4\kappa_p\overline{n}_p \big\langle \left(b^\dagger b+1\right)
u_0\big|\abs{f(N+2)-f(N)}u_0\big\rangle.
\end{multline*}

By using the specific form of $f$ and  $2(x-1)\geq x$ for $x\geq 2$ and $3(x-2)\geq x$ for
$x\geq 3$, we get
\begin{gather*}
\abs{1-\frac{f(x+1)}{f(x)}}\leq \frac{15}{x}\,, \qquad \abs{1-\frac{f(x+2)}{f(x)}}\leq
\frac{80}{x}\,, \qquad \text{for } x\geq 1;
\\
\abs{\frac{f(x-1)}{f(x)}-1}\leq \frac{64}{x}\,, \quad \text{for } x\geq 2; \qquad
\abs{\frac{f(x-2)}{f(x)}-1}\leq \frac{648}{x}\,, \quad \text{for } x\geq 3.
\end{gather*}
These inequalities can slightly modified to include also the cases $x=0,1,2$. Then, by further
straightforward estimates, one gets
\begin{multline*}
2\RE \sum_{i\geq 1}\Bigl\langle u_i+\frac 1 2 \, R_iu_0 \Big|[C_\epsilon,R_i]u_0\Bigr\rangle
\leq   4\left(\kappa \overline{n} +16 \kappa_p \overline{n}_p\right) \norm{u_0}^2 + 324\langle
u_7|C_\epsilon u_7\rangle
\\ {} +
64 \langle u_8|C_\epsilon u_8\rangle+\sum_{i=2,3,6}\left(\norm{u_i}^2+15\langle u_i|C_\epsilon
u_i\rangle\right) +16 \sum_{i=1,4,5}\left(\norm{u_i}^2 +  5 \langle u_i|C_\epsilon
u_i\rangle\right)
\\
{}+8\left(32\kappa (\overline{n}+1)+ 162\kappa_p (\overline{n}_p+1) + 15 \kappa \overline{n}
+60 \kappa_p \overline{n}_p\right) \langle u_0|C_\epsilon u_0\rangle .
\end{multline*}
This ends the proof of the inequality.
\end{proof}

To use the number operator $ N=a^\dagger a +2b^\dagger b$, which commutes with $H_0$, is
suggested by \cite[Chapter 3]{Bott}, where the conservativity property of the minimal quantum
dynamical semigroup in a similar model is proved.

\begin{proposition} Hypothesis \ref{hyp:Markov} holds for the model of this section.
\end{proposition}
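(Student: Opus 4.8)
The plan is to verify the five conditions of Theorem~\ref{theor_QZ} with $A=K^*$ and $L_k=N_k^*$, $k=1,\ldots,8$; by Theorem~\ref{teo:V-isometry} the minimal QDS of Hypothesis~\ref{hyp:Markov} is exactly the one these generate, and by Theorem~\ref{theo:hypF} (where $S_{ij}=\delta_{ij}\1$ and $N_j=-R_j^{\,*}$, with $R_j$ closed) we have $L_k=N_k^*=-R_k$, so $\|L_ku\|=\|R_ku\|$. For the two auxiliary positive operators I would take $Z$ implementing the dissipation, defined by $\|Z^{1/2}u\|^2=\sum_k\|R_ku\|^2$; inserting the explicit channel operators gives $Z=2\kappa(2\overline n+1)\,a^\dagger a+2\kappa_p(2\overline n_p+1)\,b^\dagger b+(2\kappa\overline n+2\kappa_p\overline n_p)\1$, and I would take $Q=\lambda(N+1)^2$ with $N=a^\dagger a+2b^\dagger b$ and $\lambda$ large. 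The dissipation identity $-2\RE\la u|K^*u\ra=\sum_k\|R_ku\|^2=\|Z^{1/2}u\|^2$ is the second equation of \eqref{dissipativity} extended to $\Dom(K^*)$ by Proposition~\ref{prop:moreonF}, while $Z\le c_0(N+1)\le\lambda(N+1)^2=Q$ for $\lambda\ge c_0$ furnishes both $\Dom(Q)\subset\Dom(Z)$ and $\|Z^{1/2}u\|\le\|Q^{1/2}u\|$.

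The key structural fact, which motivates the choice of $N$, is that $N$ commutes with $H_0$ (the interaction $a^{\dagger2}b-b^\dagger a^2$ raises $a^\dagger a$ by $2$ and lowers $b^\dagger b$ by $1$) and with $\sum_kR_k^*R_k$, so $[N,K^*]=0$ and $[N,Q]=0$. Restricting $K^*$ to the finite-dimensional eigenspace $\{N=\nu\}$, its real part equals $-\tfrac12Z$, which is $\le-\tfrac c2\nu$ there; hence every singular value of $K^*|_{\{N=\nu\}}$ is $\ge\tfrac c2\nu$, and summing over $\nu$ yields $\|Nu\|\le\tfrac2c\|K^*u\|$. This gives $\Dom(K^*)\subset\Dom(N)=\Dom(Q^{1/2})$, and $\Dom(K^*)$ is a core for $N$ since it contains $D$, which is a core for $N$. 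Applying $[N,K^*]=0$ once more gives $\Dom(K^{*2})\subset\Dom(N^2)$, and as each $R_k$ is first order in $a,a^\dagger,b,b^\dagger$ it maps $\Dom(N^2)$ into $\Dom(N)$, settling $\bigcap_kL_k\big(\Dom(A^2)\big)\subset\Dom(Q^{1/2})$.

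The main step is the Lyapunov inequality $2\RE\la Q^{1/2}u|Q^{1/2}K^*u\ra+\sum_k\|Q^{1/2}R_ku\|^2\le b\,\|Q^{1/2}u\|^2$ for $u\in\Dom(K^{*2})$, i.e.\ $KQ+QK^*+\sum_kR_k^*QR_k\le bQ$ as forms. Here the commutation $[N,H_0]=0$ pays off: the anti-self-adjoint part $-\rmi H_0$ of $K$ commutes with $Q=f(N)$, $f(x)=\lambda(x+1)^2$, and drops out, leaving $\sum_k\big(R_k^*QR_k-\tfrac12\{Q,R_k^*R_k\}\big)$. Using $af(N)=f(N+1)a$, $a^\dagger f(N)=f(N-1)a^\dagger$ and their $b$-analogues (shift $2$), each summand reduces to $R_k^*R_k$ times an increment of $f$: the annihilation channels $R_1,\ldots,R_6$ give nonpositive terms, while the creation channels $R_7=\beta_4b^\dagger$ and $R_8=\alpha_4a^\dagger$ give the only positive contributions, $|\alpha_4|^2(a^\dagger a+1)\big[f(N+1)-f(N)\big]$ and $|\beta_4|^2(b^\dagger b+1)\big[f(N+2)-f(N)\big]$, both of order $N^2$ and hence $\le b\,Q$ for suitable $b$. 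This is the analogue of the estimate in the proof of Theorem~\ref{theo:hypF}(viii), but simpler, since $Q$ is a fixed polynomial in $N$ rather than the regularised $C_\epsilon$, so the increments of $f$ are computed exactly and the lower-order remainders absorbed into $b\,\|Q^{1/2}u\|^2$. I expect the bookkeeping of the two increment estimates for the creation channels to be the only real obstacle; once the five conditions are verified, Theorem~\ref{theor_QZ} yields that the minimal QDS generated by $K^*$ and $N_k^*$ is Markov, which is Hypothesis~\ref{hyp:Markov}.
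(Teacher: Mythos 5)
Your proposal is correct and follows essentially the same route as the paper: Theorem \ref{theor_QZ} applied with $A=K^*$, $L_k=N_k^*=-R_k$, the same operator $Z=\sum_k R_k^*R_k$, and a Lyapunov operator $Q$ built from the conserved quantity $N=a^\dagger a+2b^\dagger b$, exploiting $[H_0,N]=0$ so that the Hamiltonian drops out of the Lyapunov form and only the increments $f(N\pm1)-f(N)$, $f(N\pm2)-f(N)$ from the creation/annihilation channels survive. The one substantive difference is your choice $Q=\lambda(N+1)^2$ versus the paper's linear $Q=wN+2\kappa\overline{n}+2\kappa_p\overline{n}_p$: with the linear choice $\Dom(Q^{1/2})=\Dom(N^{1/2})=D_{RN}$, so the inclusion $\Dom(K^*)\subset\Dom(Q^{1/2})$ is already available from the elementary estimate $\norm{R_ku}^4\le 4\norm{u}^2\norm{K^*u}^2$ established in Theorem \ref{theo:hypF}, whereas your quadratic choice requires the stronger inclusion $\Dom(K^*)\subset\Dom(N)$, which you supply via the (valid, but extra) argument that $K^*$ preserves the finite-dimensional eigenspaces of $N$ and has real part $\le-\tfrac{c}{2}\nu$ there, giving $\norm{Nu}\le\tfrac{2}{c}\norm{K^*u}$; both choices make the remaining conditions and the final inequality go through.
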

\begin{proof}
We prove the sufficient condition of Theorem \ref{theor_QZ} with $A=K^*$, $L_k=N_k^*=-R_k$.
Let $D$ be as in Theorem \ref{theo:hypF} and, on their maximal domains, let us introduce the
operators
\[
Q:= wN+2\kappa \overline{n}+2\kappa_p \overline{n}_p\,, \qquad w:= \max\left\{ 2\kappa\left( 2\overline{n}
+1\right),\; \kappa_p\left( 2\overline{n}_p+ 1\right)\right\},
\]
\[
Z:=2\kappa\left( 2\overline{n}+1\right)a^\dagger a+ 2\kappa_p\left( 2\overline{n}_p
+ 1\right) b^\dagger b+2\kappa \overline{n}+2\kappa_p \overline{n}_p\,.
\]
By defining also $v:= \min\left\{ 2\kappa\left( 2\overline{n} +1\right),\; \kappa_p\left(
2\overline{n}_p+ 1\right)\right\}$, on $D$ we have
\[
0\leq \frac vw\, Q + \frac{w-v}w\left(2\kappa \overline{n}+2\kappa_p \overline{n}_p\right)\leq Z\leq Q.
\]
In particular we get $\Dom(Q)=\Dom(Z)=\Dom(N)=\Dom(a^\dagger a)\cap \Dom(b^\dagger b)$,
$D\subset \Dom(Q)\subset \Dom\left(Q^{1/2}\right)=\Dom\left(Z^{1/2}\right)$. The set $D$ is a
core for $Q^{1/2}$.

In the proof of Theorem \ref{theo:hypF} it is shown that $\Dom(K^*)\subset D_{RN}$. But one
can check that $D_{RN}=\Dom\left(Q^{1/2}\right)$, so, we have $\Dom(K^*)\subset
\Dom\left(Q^{1/2}\right)=\Dom\left(Z^{1/2}\right)$.

Finally, we get $\bigcap_{k\ge1}R_k\big(\Dom(K^{*2})\big)$ by the fact that the $R_k$s are
proportional to $a$, $a^\dagger$, $b$ or $b^\dagger$ and that $\Dom(K^{*2})\subset
\Dom(a^\dagger a)\cap \Dom(b^\dagger )$, as one can check.

For $u\in D$ we get by direct computations
\[
-2 \RE \langle u |K^* u\rangle =\sum_{k\geq 1}\norm{R_ku}^2= \norm{Z^{1/2}u}^2, \qquad
\norm{Z^{1/2}u}\leq \norm{Q^{1/2}u},
\]
\[
\norm{Q^{1/2}u}^2=w\norm{a u}^2+2 w\norm{b u}^2 + 2\left(\kappa \overline{n} + \kappa \overline{n}_p
\right) \norm{u}^2,
\]
\begin{multline*}
2\RE \langle Q^{1/2}u|Q^{1/2}K^*u\rangle + \sum_{k\geq 1}\norm{Q^{1/2}R_ku}^2 \\
{}=2\left(\kappa \overline{n} + \kappa \overline{n}_p \right) \norm{u}^2- 2w \kappa
\norm{au}^2 - 4w \kappa_p\norm{bu}^2\leq \norm{Q^{1/2}u}^2.
\end{multline*}
Then, these inequalities can be extended to the domains required in Theorem \ref{theor_QZ} and
this ends the proof.
\end{proof}

In order to describe the two photocounters and the homodyne detector we have to specialise the
observables \eqref{observables}; what we need is to take $m=3$ and \cite{BarSpringer}
\[
X(\alpha,t) = \begin{cases}
\Lambda_{\alpha \alpha} (t), &\alpha=1,\,2,
\\ \displaystyle
\int_0^t\left(\rme^{-\rmi\left(\theta_3 - \omega_C t \right)} \, \rmd A_3(s)+
\rme^{\rmi\left(\theta_3 - \omega_C t \right)}\,\rmd A^\dagger_3(s)\right), & \alpha=3.
\end{cases}
\]
This means that the quantities in Hypothesis \ref{H:theBs} are
\begin{subequations}
\begin{equation}
c(t)=0, \quad b(t)=0, \quad B^1=|z_1\rangle\langle z_1|, \quad
B^2=|z_2\rangle\langle z_2|, \quad B^3=0,
\end{equation}
\begin{equation}
h^1(t)=h^2(t)=0, \quad h^3_i(t)=\delta_{i3} \,\rme^{\rmi\left(\theta_3 - \omega_C t \right)}.
\end{equation}
\end{subequations}
This choice trivially satisfies Hypothesis \ref{H:theBs} and the expressions of the quantities
in Definition \ref{def:Phi} become
\[
\mathsf{S}\big(k(s)\big)=\1 + \sum_{j=1}^2 \left(\rme^{\rmi k_j(s)}-1\right)|z_j\rangle\langle z_j|,
\qquad \quad\mathsf{r}_{t}(k)(s)=
1_{(0,t)}(s)\, \rmi k_3(s) h^3(s)\,,
\]
\[
\big(\mathsf{S}_t(k) g\big)(s) =
1_{(0,t)}(s) \sum_{j=1}^2 \left(\rme^{\rmi k_j(s)}-1\right) g_j(s) \,z_j +g(s),
\qquad \mathsf{r}(k;s)= \rmi k_3(s) h^3(s) .
\]

Finally, in order to describe a coherent monochromatic laser pumping the $b$-mode as in the
source term in the master equation \eqref{mastereqC}, we have to take a coherent state of the
field with $f$-function given by
\begin{equation}
f_i(t)=\delta_{i4}\, \frac{\rmi \lambda \rme^{-2\rmi \omega_C t} }
{\overline{\beta_{2}}}\,1_{(0,T)}(t).
\end{equation}
We are assuming $\beta_2\neq 0$ and we understand that $T$ is a large time (needed to have an
$L^2$-function), but that $T\to +\infty$ in the reduced characteristic operator.

In conclusion the model just described is well defined, as it satisfies all the hypotheses
introduced in this paper. Moreover, one can check that the associated formal master equation
(Eq.\ \eqref{foreward} for $k=0$) reduces to Eq.\ \eqref{mastereqC}, as we wanted.


\begin{thebibliography}{99}
\bibitem{Acc78} L. Accardi, On the quantum Feynman-Kac formula, \textit{Rend. Sem. Mat.
    Fis. Milano} \textbf{XLVIII} (1978) 135--179.

\bibitem{Bar86LNP} A. Barchielli, Stochastic processes and continual measurements in
    quantum mechanics, in \textit{Stochastic
    Processes in Classical and Quantum Systems}, Lecture Notes in Physics \textbf{262},
    eds. S. Albeverio, G. Casati, D. Merlini,
    (Springer, Berlin, 1986) pp. 14--23.

\bibitem{Bar86PR} A. Barchielli, Measurement theory and stochastic differential
    equations in quantum mechanics, \textit{Phys. Rev. A} \textbf{34} (1986) 1642--1649.

\bibitem{Bar90QO} A. Barchielli, Direct and heterodyne detection and other
    applications of quantum stochastic calculus to quantum optics, \textit{Quantum Opt.} \textbf{2}
    (1990) 423--441.

\bibitem{BarSpringer} A. Barchielli, Continual Measurements in Quantum Mechanics and
    Quantum Stochastic Calculus, in \textit{Open Quantum Systems III}, Lecture Notes in Mathematics
    \textbf{1882}, eds. S. Attal, A. Joye, C.-A. Pillet, (Springer, Berlin, 2006)
    pp. 207--291.
\bibitem{BarGSpringer} A.\ Barchielli, M.\ Gregoratti, \textit{Quantum Trajectories and
    Mesurements  in Continuous Time}, Lecture Notes in Physics
    \textbf{782} (Springer, Berlin, 2009).

\bibitem{BarL85JMP} A. Barchielli, G. Lupieri, Quantum stochastic calculus, operation
    valued stochastic processes and continual measurements in quantum mechanics, \textit{J. Math.
    Phys.} \textbf{26} (1985) 2222--2230.

\bibitem{Bel88} V. P. Belavkin, Nondemolition measurements, nonlinear filtering and
    dynamic programming of quantum stochastic processes, in
    \textit{Modelling and Control of Systems}, ed. A. Blaqui\`ere,
    Lecture Notes in Control and Information
    Sciences \textbf{121} (Springer, Berlin, 1988) pp. 245--265.
\bibitem{Belavkin} V. P. Belavkin, Measurement, Filtering and Control in Quantum Open
    Dynamical Systems, \textit{Rep. Math. Phys.} \textbf{43} (1999) 405--425.

\bibitem{Bott} C. Bottero, \textit{A Qualitative Analysis of two Master Equations in Quantum
    Optics.} PhD thesis, Department of Mathematics, Politecnico di Milano, 2008.
\bibitem{Carm08} H. J. Carmichael, \textit{Statistical Methods in
    Quantum Optics 2. Non-Classical Fields} (Springer, Berlin, 2008).

\bibitem{Castro} R. Castro, \textit{Quantum Stochastic Calculus and Continual Measurements:
    The case of Unbounded Coefficients.} PhD thesis, Department of Mathematics, Univesit\`{a} di
    Roma La Sapienza, January 2007.

\bibitem{Fagnola99} F. Fagnola, Quantum Markov Semigroups and Quantum Flows,
    \textit{Proyecciones, Journal of Mathematics} \textbf{18} (1999), no.~3, pp. 1--144.

\bibitem{Fagnola02} F. Fagnola, H-P Quantum stochastic differential equations, in N. Obata,
    T. Matsui, A. Hora (eds.), \textit{Quantum probability and White Noise Analysis}, QPPQ, XVI,
    51–96, World Sci., River Edge, NJ, 2002).

\bibitem{Fagnola06} F. Fagnola, Quantum Stochastic Differential Equations and Dilation
    of Completely Positive Semigroups, in \textit{Open Quantum Systems II},
    Lecture Notes in Mathematics \textbf{1881}, eds. S. Attal, A. Joye, C.-A. Pillet, (Springer,
    Berlin, 2006) pp. 183--220.

\bibitem{FagW03} F. Fagnola, S. J. Wills, Solving quantum stochastic differential
    equations with unbounded coefficients, \textit{J. Funct. Anal.} \textbf{198} (2003) 279--310.

\bibitem{Fri85} A. Frigerio, Covariant Markov dilations of quantum dynamical
    semigroups, \textit{Pub. RIMS Kyoto Univ.} \textbf{21} (1985) 657--675.

\bibitem{GarZ00} C. W. Gardiner, P. Zoller, \textit{Quantum Noise} (Springer, Berlin, 2000).

\bibitem{GraS87} P. Grangier, R. E. Slusher, B. Yurke, A. La Porta, Squeezed-light–enhanced
    polarization interferometer, \textit{Phys. Rev. Lett.}
    \textbf{59} (1987) 2153--2156.

\bibitem{Hud-Partha} R. L. Hudson and K. R. Parthasarathy, Quantum It\^o's formula and
    stochastic evolutions, \textit{Comm. Math Phys.} \textbf{93} (1984) 301--323.

\bibitem{Partha} K. R. Parthasarathy, \textit{An Introduction to Quantum Stochastic Calculus}
    (Birkh\"auser, Basel, 1992).

\bibitem{Pazy83} A. Pazy, \textit{Semigroups of Linear Operators and Applications to Partial
    Differential Equations} (Springer, Berlin, 1983).

\bibitem{ReedS} M. Reed, B. Simon, \textit{Methods of Modern Mathematical Physics: I
    Functional Analysis} (Academic Press, 1980).

\bibitem{Sakai} S. Sakai, $C^*$\textit{-Algebras and }$W^*$\textit{-Algebras} (Springer,
    Berlin 1971).

\bibitem{ZolG97} P. Zoller, C. W. Gardiner, Quantum noise in quantum optics: the
    stochastic Schr\"odinger equation, in \textit{Fluctuations quantiques, (Les Houches 1995)},
    eds. S. Reynaud, E. Giacobino and J. Zinn-Justin, (North-Holland, Amsterdam,
    1997) pp. 79–-136.
\bibitem{WuK86} L.-A. Wu, H. J. Kimble, J. L. Hall, H. Wu, Generation of Squeezed States by
    Parametric Down Conversion, \textit{Phys. Rev. Lett. }
    \textbf{57} (1986) 2520--2523.
\bibitem{WuK87} L.-A. Wu, M. Xiao, H. J. Kimble, Squeezed states of light from an optical
    parametric oscillator, \textit{J. Opt. Soc. Am. B} \textbf{4} (1987)
    1465--1475.

\end{thebibliography}
\end{document}